\numberwithin{equation}{section}
\theoremstyle{plain}
\newtheorem{theorem}{Theorem}[section]
\newtheorem{lemma}[theorem]{Lemma}
\newtheorem{corollary}[theorem]{Corollary}
\newtheorem{theoremcite}{Theorem}
\theoremstyle{definition}
\begin{document}

\title[Notes on the codimension one conjecture]{Notes on the codimension one conjecture in the operator corona theorem}

\author{Maria F. Gamal'}
\address{
 St. Petersburg Branch\\ V. A. Steklov Institute 
of Mathematics\\
 Russian Academy of Sciences\\ Fontanka 27, St. Petersburg\\ 
191023, Russia  
}
\email{gamal@pdmi.ras.ru}

\thanks{Partially supported by RFBR grant No. 14-01-00748-a}

\subjclass[2010]{ Primary 30H80; Secondary 47A45,  47B20.}

\keywords{Operator corona theorem, contraction, similarity to an isometry}

\begin{abstract}
Answering on the question of S.R.Treil \cite{23}, for every $\delta$, $0<\delta<1$, 
examples of contractions are constructed such that their characteristic functions 
$F\in H^\infty(\mathcal E\to\mathcal E_\ast)$ satisfy the conditions $$\|F(z)x\|\geq\delta\|x\| \  \text{ and } \ 
\dim\mathcal E_\ast\ominus F(z)\mathcal E =1 \ \text{  for every } \ z\in\mathbb D, \ \ x\in\mathcal E,$$
 but $F$ are not
left invertible. Also, it is shown that the condition 
$$\sup_{z\in\mathbb D}\|I-F(z)^\ast F(z)\|_{\frak S_1}<\infty,$$ 
where $\frak S_1$ is the trace class of operators, which is sufficient for the 
left invertibility 
of the operator-valued function $F$ satisfying the estimate $\|F(z)x\|\geq\delta\|x\|$ 
for every $z\in\mathbb D$, $x\in\mathcal E$, with some $\delta>0$ (S.R.Treil,  \cite{22}), is necessary 
for the left invertibility of an inner function $F$ such that $\dim\mathcal E_\ast\ominus F(z)\mathcal E<\infty$ 
for some $z\in\mathbb D$.
\end{abstract} 

\maketitle

\section{Introduction}

Let $\mathbb D$ be the open unit disk, let $\mathbb T$  be the unit circle,
 and let $\mathcal E$ and $\mathcal E_\ast$ be separable Hilbert spaces.
The space $H^\infty(\mathcal E\to\mathcal E_\ast)$ is the space of bounded analytic functions on $\mathbb D$ 
whose values are (linear, bounded) operators acting from $\mathcal E$ to $\mathcal E_\ast$.
If $F\in H^\infty(\mathcal E\to\mathcal E_\ast)$, then $F$ has  nontangential boundary values 
$F(\zeta)$ for a.e. $\zeta\in\mathbb T$ with respect to the Lebesgue measure $m$ on $\mathbb T$.
Every function  $F\in H^\infty(\mathcal E\to\mathcal E_\ast)$ has the inner-outer 
factorization, that is, there exist an auxiliary Hilbert space $\mathcal D$ and  two functions 
$\Theta\in H^\infty(\mathcal E\to\mathcal D)$ and $\Omega\in H^\infty(\mathcal D\to\mathcal E_\ast)$ such that 
$F=\Theta\Omega$, $\Theta$ is inner, that is, $\Theta(\zeta)$ is an isometry for a.e. $\zeta\in\mathbb T$, 
and $\Omega$ is outer. The definition of outer function is not recalled here,
 but it need to mentioned that 
for an outer function $\Omega$, $\operatorname{clos}\Omega(z)\mathcal D=\mathcal E_\ast$
for all $z\in\mathbb D$ and for a.e. $z\in\mathbb T$. Recall that a function $F\in H^\infty(\mathcal E\to\mathcal E_\ast)$ is called $\ast$-inner
($\ast$-outer), if  the function $F_\ast\in H^\infty(\mathcal E_\ast\to\mathcal E)$, 
$F_\ast(z)=F^\ast(\overline z)$, $z\in\mathbb D$, is inner (outer) (see {\cite[Ch.V]{16}}, also {\cite[\S A.3.11.5]{14}}).

Every analytic operator-valued function $F\in H^\infty(\mathcal E\to\mathcal E_\ast)$ such that $\|F\|\leq 1$ can be represented as an orthogonal sum of a unitary constant and a purely contractive function $F_0$, that is, 
there exist representations of Hilbert spaces $\mathcal E=\mathcal E'\oplus\mathcal E_0$ and 
$\mathcal E_\ast=\mathcal E_\ast'\oplus\mathcal E_{\ast 0}$ and a unitary operator 
$W\colon\mathcal E'\to\mathcal E_\ast'$ such that $F(z)\mathcal E'\subset\mathcal E_\ast'$, 
$F(z)|_{\mathcal E'}=W$, $F(z)\mathcal E_0\subset\mathcal E_{\ast 0}$, $F_0(z)=F(z)|_{\mathcal E_0}$ for every 
$z\in\mathbb D$, and $\|F_0(0)x\|<\|x\|$ for every $x\in\mathcal E_0$, $x\neq 0$ 
({\cite[Proposition V.2.1]{16}}). In all questions considered in this note it can be supposed that $\|F\|\leq 1$ and $F$ is purely contractive.

The Operator Corona Problem is to find necessary and sufficient condition 
for a function   
$F\in H^\infty(\mathcal E\to\mathcal E_\ast)$ to be left invertible, that is, to exist a function 
$G\in H^\infty(\mathcal E_\ast\to\mathcal E)$ such that $G(z)F(z)=I_{\mathcal E}$ for all $z\in\mathbb D$.
If $F\in H^\infty(\mathcal E\to\mathcal E_\ast)$ is left invertible, then there exists $\delta>0$
such that 
\begin{equation}\label{1.1}\|F(z)x\|\geq \delta \|x\| \ \ \ \ \text{ for all } \ x\in\mathcal E, \ \ z\in\mathbb D. \end{equation}
It is easy to see that if $F$ satisfies \eqref{1.1} and $F=\Theta\Omega$ is the inner-outer factorization of $F$,
then the outer function $\Omega$ is  invertible, and the inner function $\Theta$ satisfies \eqref{1.1} 
(may be with another $\delta$).

The condition \eqref{1.1} is sufficient for left invertibility, if $\dim\mathcal E<\infty$ \cite{19}, but 
is not sufficient in general \cite{20}, \cite{21}. Also, \eqref{1.1} is not sufficient under additional assumption 
\begin{equation}\label{1.2}\dim\mathcal E_\ast\ominus F(z)\mathcal E=1 \ \ \text{ for all } \ z\in\mathbb D. \end{equation}
In \cite{23}, for every $\delta$, $0<\delta<1/3$, two functions $F_1$, $F_2\in H^\infty(\mathcal E\to\mathcal E_\ast)$ 
are constructed such that 
\begin{equation}\label{1.3}  \|x\|\geq\|F_k(z)x\|\geq \delta \|x\| \ \ \ \ \text{ for all } \ x\in\mathcal E, \  \ z\in\mathbb D,\end{equation}
\eqref{1.2} is fulfilled for $F_k$, $k=1,2$, $$F_1(\zeta)\mathcal E=\mathcal E_\ast \ \text{  for a.e. }\  \zeta\in\mathbb T,
\ \ \ \ 
\dim\mathcal E_\ast\ominus F_2(\zeta)\mathcal E=1 \ \text{ for a.e. }\  \zeta\in\mathbb T,$$ 
but $F_1$ and  $F_2$ are not 
left invertible. It is mentioned in \cite{23} that the method from  \cite{20}, \cite{21} gives examples of such functions for
$\delta<1/\sqrt 2$, and a question was posed if for every $\delta$, $0<\delta<1$, there exists 
$F\in H^\infty(\mathcal E\to\mathcal E_\ast)$ such that $F$ satisfies \eqref{1.2} and \eqref{1.3}, and $F$ is not 
left invertible.
In this note, it is shown that such function $F$ exists for every $0<\delta<1$, and 
 any from the following cases can be realized: 
\begin{equation}\label{1.4}F(\zeta)\mathcal E=\mathcal E_\ast \ \ \text{ for a.e. } \ \zeta\in\mathbb T,  \end{equation}
 or  
\begin{equation}\label{1.5}\dim\mathcal E_\ast\ominus F(\zeta)\mathcal E=1 \ \ \text{ for a.e. } \ \zeta\in\mathbb T,  \end{equation}
 or 
\begin{equation}\label{1.6}\dim\mathcal E_\ast\ominus F(\zeta)\mathcal E=1 \ \ \text{ for a.e. } \ \zeta\in E,  \ \text{  and }
\ \  
F(\zeta)\mathcal E=\mathcal E_\ast \ \ \text{ for a.e. } \zeta\in\mathbb T\setminus E, \end{equation}
 where 
$E\subset\mathbb T$ is a closed set satisfying the Carleson condition with $0<m(E)<1$ (see Sec. 5 
of this note where  the definition is recalled).

Actually, not operator-valued functions, but contractions are constructed, and the required 
functions are the characteristic functions of these contractions {\cite[Ch. VI]{16}}, see Sec. 3 of this note.

In \cite{22}, some sufficient conditions are given, 
which imply the left invertibility of functions, in particular, 
it is proved in  \cite{22}, that if $F\in H^\infty(\mathcal E\to\mathcal E_\ast)$ 
satisfies to \eqref{1.1} and 
\begin{equation}\label{1.7}\sup_{z\in\mathbb D}\|I_{\mathcal E}-F(z)^\ast F(z)\|_{\frak S_1}<\infty, 
\end{equation}
where $\frak S_1$ is the trace class of operators, then $F$ is left invertible.
In this note, it is shown that the condition \eqref{1.7} is necessary for left invertibility of $F$, if $F$ is inner
and $\dim\mathcal E_\ast\ominus F(z)\mathcal E<\infty$ for some $z\in\mathbb D$. Actually, an appropriate fact 
is proved for contractions with such characteristic functions, and the statement on function follows from 
the fact for contractions.

 The function $F\in H^\infty(\mathcal E\to\mathcal E_\ast)$ has a left scalar multiple  if there exist 
$G\in H^\infty(\mathcal E_\ast\to\mathcal E)$ and a function $\rho\in H^\infty$, where $H^\infty$ is the algebra of all
bounded analytic functions on $\mathbb D$,  such that 
$\rho(z)I_{\mathcal E}= G(z)F(z)$ for all $z\in\mathbb D$. 
 The left invertibility of $F$ means that $F$ has a scalar multiple, which is invertible in $H^\infty$. 
Functions $F_1$ and  $F_2$ from  \cite{23} mentioned above do not have scalar multiple,
the proof is actually the same as the proof that $F_1$ and $F_2$ are not left invertible. 
In this note, it is shown that the existence of the left scalar multiple of $F$ with \eqref{1.1} is not sufficient 
for the left invertibility of $F$, even if $F$ is inner and $F$ satisfies \eqref{1.2}. In this case, 
$I_{\mathcal E}-F(z)^\ast F(z)\in\frak S_1$ for every $z\in\mathbb D$, but 
$\sup_{z\in\mathbb D}\|I_{\mathcal E}-F(z)^\ast F(z)\|_{\frak S_1}=\infty$. Again, 
an appropriate
contraction is constructed, and $F$ is the characteristic function of this contraction. 

We shall use the following notation: $\mathbb D$ is the open unit disk, 
$\mathbb T$ is the unit circle, $m$ is the normalized Lebesgue measure on $\mathbb T$, 
and $H^2$ is 
the Hardy space in $\mathbb D$. 
For a  positive integer $n$, $1\leq n<\infty$, $H^2_n$ and $L^2_n$ 
are orthogonal sums of $n$ copies of spaces $H^2$ and $L^2 =L^2(\mathbb T,m)$, 
respectively. 
The unilateral shift $S_n$ and the bilateral shift $U_n$ of multiplicity $n$ 
 are the operators  of multiplication by the independent variable in spaces 
 $H^2_n$ and $L^2_n$, respectively. For a Borel set $\sigma\subset\mathbb T$, by 
$U(\sigma)$ we denote the operator of multiplication by the
independent variable on the space $L^2(\sigma,m)$ of functions from $L^2$ 
that are equal to zero a.e. on $\mathbb T\setminus\sigma$.

For a Hilbert space $\mathcal H$, by $I_{\mathcal H}$ and $\mathbb O_{\mathcal H}$ the identity and the zero operators 
acting on $\mathcal H$ are denoted, respectively.

Let $T$ and $R$ be operators on spaces $\mathcal H$ and $\mathcal K$, 
respectively, and let $X:\mathcal H\to\mathcal K$ be a (linear, bounded) operator which 
 intertwines $T$ and $R$:  $XT=RX$. If $X$ is unitary, then $T$ and $R$ 
are called {\it unitarily equivalent}, in notation: 
$T\cong R$. If $X$ is invertible (the inverse $X^{-1}$  is bounded), then $T$ and $R$ 
are called {\it similar}, in notation: $T\approx R$. 
If $X$ a quasiaffinity, that is, $\ker X=\{0\}$ and 
$\operatorname{clos}X\mathcal H=\mathcal K$, then 
$T$ is called a {\it quasiaffine transform} of $R$, 
in notation: $T\prec R$. If $T\prec R$ and $R\prec T$, 
then $T$ and $R$ are called {\it quasisimilar}, in notation: $T\sim R$. 

Let  $\mathcal H$ be a Hilbert space, 
and let $T\colon\mathcal H\to\mathcal H$ be a (linear, bounded) operator. $T$ is called a {\it contraction}, if $\|T\|\leq 1$.
Let $T$ be a contraction
 on a space $\mathcal H$. $T$ is {\it of class} $C_{1\cdot}$ 
 ($T\in C_{1\cdot}$), if $\lim_{n\to\infty}\|T^nx\|>0$ for
each $x\in \mathcal H$, $x\neq 0$, $T$ is {\it of class} $C_{0\cdot}$ 
($T\in C_{0\cdot}$), if 
$\lim_{n\to\infty}\|T^nx\|=0$ for each $x\in\mathcal H$, and $T$ is 
of class $C_{\cdot a}$, $a=0,1$, if
$T^\ast$ is of class $C_{a\cdot}$. 

 It is easy to see that if a contraction $T$ 
is a quasiaffine transform of an isometry, then $T$ is of class $C_{1\cdot}$, 
 and if $T$ is  a quasiaffine transform of a unilateral shift,  
then $T$ is of class $C_{10}$.

The paper is organized as follows.
In Sec. 2 and 3, the known facts about contractions, their relations to isometries, 
and their characteristic functions are collected. In Sec. 4, the necessity of \eqref{1.7} 
to the left invertibility of some operator-valued functions is proved. Sec. 5 is the main section 
of this paper, where for any $\delta$, $0<\delta<1$, examples of subnormal contractions are constructed such that 
their characteristic functions satisfy \eqref{1.2}, and \eqref{1.3} with $\delta$,  and are not 
left invertible.

\section{Isometric and unitary asymptotes of contractions}

For a contraction $T$ the {\it isometric asymptote} $(X_{T,+},T_+^{(a)})$, and  
the {\it unitary asymptote} $(X_T,T^{(a)})$ was defined, 
 see, for example, {\cite[Ch. IX.1]{16}}. There are some ways to construct
the isometric asymptote of the contraction, for our purpose, 
it is convenient to use the following, see \cite{11}.  
 
Let $(\cdot,\cdot)$ be the inner 
product on the Hilbert space $\mathcal H$, 
and let $T\colon\mathcal H\to\mathcal H$ be a  contraction.
 Define a new semi-inner product on
$\mathcal H$ by the formula $\langle x,y\rangle=\lim_{n\to\infty}(T^nx,T^ny)$,
 where $x,y\in \mathcal H$. Set 
$$\mathcal H_0=\mathcal H_{T,0}=\{x\in\mathcal H:\ \langle x, x\rangle=0\}.$$
Then the factor space $\mathcal H/\mathcal H_0$ with the inner product 
$\langle x+\mathcal H_0, y+\mathcal H_0\rangle=\langle x,y\rangle$ will 
be an inner product space.
Let $\mathcal H_+^{(a)}$ denote the resulting Hilbert space obtained 
by completion, and let $X_{T,+}\colon\mathcal H\to\mathcal H_+^{(a)}$ 
be the natural imbedding,
$X_{T,+}x=x+\mathcal H_0$. Clearly, $X_{T,+}$ is a (linear, bounded) operator, and $\|X_{T,+}\|\leq 1$. 
Clearly, $\langle Tx,Ty\rangle=\langle x,y\rangle$ for every $x,y\in \mathcal H$.
Therefore, $T_1: x+\mathcal H_0\mapsto Tx+\mathcal H_0$ is a well-defined isometry on 
$\mathcal H/\mathcal H_0$. Denote by $T_+^{(a)}$ the continuous extension of $T_1$ to the space
$\mathcal H_+^{(a)}$. Clearly, $X_{T,+}T=T_+^{(a)}X_{T,+}$. The pair $(X_{T,+},T_+^{(a)})$ is called 
the {\it isometric asymptote} of a contraction $T$. The operator $X_{T,+}$ is called the {\it canonical intertwining mapping}.

A contraction $T$ is similar 
to an isometry $V$ if and only if $X_{T,+}$ is an invertible operator, 
that is, $\ker X_{T,+}=\{0\}$ and $X_{T,+}\mathcal H= \mathcal H_+^{(a)}$, and in this case, $V\cong T_+^{(a)}$
(see {\cite[Theorem 1]{11}}). In particular, if $T$ is a contraction of class $C_{10}$, and $T_+^{(a)}$ 
is a unitary operator, then $T$ is not similar to an isometry. 

Denote by $T^{(a)}$ the minimal unitary extension of $T_+^{(a)}$, by $\mathcal H^{(a)}\supset\mathcal H_+^{(a)}$ 
the space on which  $T^{(a)}$ acts, and by $X_T$ the imbedding of $\mathcal H$ into $\mathcal H^{(a)}$.
Clearly, $X_TT=T^{(a)}X_T$ and $X_{T,+}x=X_Tx$ for every $x\in\mathcal H$.
The pair $(X_T,T^{(a)})$ is called the {\it unitary asymptote} $(X,T^{(a)})$ of a contraction $T$. 

\section{Contractions and their characteristic functions}

All statement of this section are well-known and can be found in {\cite[Ch. VI]{16}}, see also 
{\cite[Ch. C.1]{14}}.

Let $\mathcal H$ be a separable Hilbert space, and let $T\colon\mathcal H\to\mathcal H$ be 
a  contraction.
A contraction $T$ is called {\it completely nonunitary}, if $T$ has no invariant subspace such that 
the restriction of $T$ on this subspace is unitary.
For a contraction $T$ put $\mathcal D_T=\operatorname{clos}(I_{\mathcal H}-T^\ast T)\mathcal H$.
It is easy to see that \begin{equation}\label{3.1}\text{ if } \ x\in\mathcal H\ominus\mathcal D_T, \ \  
\text{ then } \ \ x=T^\ast Tx \ \ \text{ and } \ \ \|Tx\|=\|x\|. \end{equation}  
Also, $T\mathcal D_T\subset\mathcal D_{T^\ast}$ and $T(\mathcal H\ominus\mathcal D_T)=
 \mathcal H\ominus\mathcal D_{T^\ast}$ (see {\cite[Ch. I.3.1]{16}}), therefore, 
\begin{equation}\label{3.2} \dim \mathcal D_{T^\ast}\ominus T\mathcal D_T = \dim\mathcal H\ominus T\mathcal H .  \end{equation}  
Since $I_{\mathcal H} - T^\ast T=(I_{\mathcal D_T} - T^\ast T|_{\mathcal D_T})\oplus\mathbb O_{\mathcal H\ominus\mathcal D_T}$,
\begin{equation}\label{3.3}\|I_{\mathcal H} - T^\ast T\|_{\frak S_1}=\|I_{\mathcal D_T} - T^\ast T|_{\mathcal D_T}\|_{\frak S_1}.   \end{equation}  

\begin{lemma}\label{lem3.1} Let $T\colon\mathcal H\to\mathcal H$ be a contraction, and let $0<\delta\leq 1$.
Then $\|Tx\|\geq\delta\|x\|$ for every $x\in\mathcal H$ if and only if 
$\|Tx\|\geq\delta\|x\|$ for every $x\in\mathcal D_T$.\end{lemma}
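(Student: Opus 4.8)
The plan is to dispose of the trivial implication first and then focus on the substantive one. If $\|Tx\|\geq\delta\|x\|$ holds for all $x\in\mathcal H$, then it holds in particular for all $x$ in the subspace $\mathcal D_T$, so that direction is immediate. For the converse, I would assume $\|Tx\|\geq\delta\|x\|$ for every $x\in\mathcal D_T$, take an arbitrary $x\in\mathcal H$, and split it as $x=x_0+x_1$ with $x_0\in\mathcal D_T$ and $x_1\in\mathcal H\ominus\mathcal D_T$, so that $\|x\|^2=\|x_0\|^2+\|x_1\|^2$.

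The crux of the argument is to show that $Tx_0$ and $Tx_1$ are orthogonal, so that $\|Tx\|^2$ splits as a sum of squares as well. This is exactly where the two facts recalled just before the lemma enter: the inclusion $T\mathcal D_T\subset\mathcal D_{T^\ast}$ places $Tx_0$ in $\mathcal D_{T^\ast}$, while the equality $T(\mathcal H\ominus\mathcal D_T)=\mathcal H\ominus\mathcal D_{T^\ast}$ places $Tx_1$ in the orthogonal complement $\mathcal H\ominus\mathcal D_{T^\ast}$. Hence $Tx_0\perp Tx_1$ and $\|Tx\|^2=\|Tx_0\|^2+\|Tx_1\|^2$.

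With the orthogonality in hand, the estimate is routine. On the one hand $\|Tx_0\|\geq\delta\|x_0\|$ by the hypothesis restricted to $\mathcal D_T$; on the other hand \eqref{3.1} gives $\|Tx_1\|=\|x_1\|$, since $x_1\in\mathcal H\ominus\mathcal D_T$. Using $\delta\leq 1$, I would then compute
\[
\|Tx\|^2=\|Tx_0\|^2+\|Tx_1\|^2\geq\delta^2\|x_0\|^2+\|x_1\|^2\geq\delta^2\bigl(\|x_0\|^2+\|x_1\|^2\bigr)=\delta^2\|x\|^2,
\]
which yields $\|Tx\|\geq\delta\|x\|$ on all of $\mathcal H$.

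I expect the only genuine obstacle to be the orthogonality $Tx_0\perp Tx_1$; everything downstream is the Pythagorean identity together with \eqref{3.1} and the trivial inequality $1\geq\delta^2$. Conceptually, the point is that $T$ behaves isometrically on $\mathcal H\ominus\mathcal D_T$ and, thanks to this orthogonality, the isometric part cannot spoil the lower bound contributed by the contractive part on $\mathcal D_T$; thus the worst case for the estimate lives entirely on $\mathcal D_T$, which is precisely the content of the lemma.
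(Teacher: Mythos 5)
Your proof is correct and follows essentially the same route as the paper: decompose $x=x_0+x_1$ with $x_0\in\mathcal D_T$, $x_1\in\mathcal H\ominus\mathcal D_T$, show $Tx_0\perp Tx_1$, and conclude by the Pythagorean identity together with $\|Tx_1\|=\|x_1\|$ from \eqref{3.1}. The only (immaterial) difference is in how orthogonality is justified: the paper computes directly $(Tx_0,Tx_1)=(x_0,T^\ast Tx_1)=(x_0,x_1)=0$ using \eqref{3.1}, whereas you invoke the inclusions $T\mathcal D_T\subset\mathcal D_{T^\ast}$ and $T(\mathcal H\ominus\mathcal D_T)=\mathcal H\ominus\mathcal D_{T^\ast}$ --- both are valid.
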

\begin{proof} Indeed, it need to prove the ``if" part only. Let $x\in\mathcal D_T$, and let $y\in\mathcal H\ominus\mathcal D_T$.
Then, by \eqref{3.1},  $(Tx,Ty)=(x,T^\ast Ty)=(x,y)=0$ and $\|Ty\|=\|y\|\geq\delta\|y\|$.
Therefore, $\|T(x+y)\|^2 = \|Tx\|^2+\|Ty\|^2\geq\delta^2\|x\|^2+\delta^2\|y\|^2=\delta^2\|x+y\|^2$. \end{proof}

The characteristic function $\Theta_T$ of the contaction $T$ is the analytic 
operator-valued function acting by the formula
\begin{equation}\label{3.4} 
\Theta_T(z)=\big(-T+z (I-TT^\ast)^{1/2}(I-zT^\ast)^{-1}(I-T^\ast T)^{1/2}\big)|_{\mathcal D_T}, 
 \ \ z\in\mathbb D. \end{equation}  
For every $z\in\mathbb D$ the inclusion $\Theta_T(z)\mathcal D_T\subset\mathcal D_{T^\ast}$ holds,
the mapping $z\mapsto\Theta_T(z)$ is an analytic function from $\mathbb D$ to
the space of all (linear, bounded) operators from $\mathcal D_T$ to $\mathcal D_{T^\ast}$,
and $\|\Theta_T(z)\|\leq 1$ for every $z\in\mathbb D$. 
That is, $\Theta_T\in H^\infty(\mathcal D_T\to\mathcal D_{T^\ast})$, and $\|\Theta_T\|\leq 1$. It is easy to see 
that $\Theta_T$ is  purely contractive.
Conversely, for every analytic operator-valued function $F\in H^\infty(\mathcal E\to\mathcal E_\ast)$
 such that  $\|F\|\leq 1$ and $F$ is purely contractive there exists a contraction $T$ such that $F=\Theta_T$ {\cite[Ch. VI]{16}}. 

\smallskip

The following theorem was proved in \cite{15}, see also {\cite[C.1.5.5]{14}}.

\begin{theoremcite}\cite{15}\label{tha}
 The contraction $T$ is similar to an isometry if and only if 
$\Theta_T$ is left invertible.\end{theoremcite}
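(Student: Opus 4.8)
The plan is to combine the similarity criterion recalled in Section~2 with the Sz.-Nagy--Foia\c{s} functional model, reducing the statement to an explicit lower estimate for the asymptotic quadratic form of $T$. First I would reduce to the completely nonunitary case: writing $T=T_0\oplus U$ with $U$ unitary and $T_0$ completely nonunitary, one has $\mathcal D_T=\mathcal D_{T_0}$, $\mathcal D_{T^\ast}=\mathcal D_{T_0^\ast}$, hence $\Theta_T=\Theta_{T_0}$, while $X_{T,+}=X_{T_0,+}\oplus X_{U,+}$ with $X_{U,+}$ unitary; thus $T$ is similar to an isometry if and only if $T_0$ is, and $\Theta_T$ is left invertible if and only if $\Theta_{T_0}$ is. So assume $T$ completely nonunitary. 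By {\cite[Theorem 1]{11}}, $T$ is similar to an isometry if and only if $X_{T,+}$ is invertible; since $X_{T,+}\mathcal H$ is dense in $\mathcal H_+^{(a)}$ by construction, this is equivalent to $X_{T,+}$ being bounded below, i.e.\ to the existence of $c>0$ with $\langle x,x\rangle=\lim_n\|T^nx\|^2\ge c^2\|x\|^2$ for all $x\in\mathcal H$. Everything thus reduces to bounding the asymptotic form $\langle\cdot,\cdot\rangle$ from below.

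Next I would compute this form in the model. Put $\mathcal E=\mathcal D_T$, $\mathcal E_\ast=\mathcal D_{T^\ast}$, $\Theta=\Theta_T$, $\Delta=(I-\Theta^\ast\Theta)^{1/2}$, $\Delta_\ast=(I-\Theta\Theta^\ast)^{1/2}$, and realize $T$ on $\mathcal H(\Theta)=\mathcal K\ominus\{\Theta w\oplus\Delta w:w\in H^2(\mathcal E)\}$, where $\mathcal K=H^2(\mathcal E_\ast)\oplus\operatorname{clos}\Delta L^2(\mathcal E)$ carries the minimal isometric dilation $V=M_\zeta$, with $\mathcal H(\Theta)$ coinvariant and $T^n=P_{\mathcal H(\Theta)}V^n|_{\mathcal H(\Theta)}$. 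For $h=(h_1,h_2)\in\mathcal H(\Theta)$, using that $V$ is isometric and that $w\mapsto\Theta w\oplus\Delta w$ is an isometry of $H^2(\mathcal E)$ onto $\mathcal M=\mathcal K\ominus\mathcal H(\Theta)$, a direct computation gives
\[
\langle h,h\rangle=\|h\|^2-\lim_n\|P_{\mathcal M}V^nh\|^2=\|h\|^2-\|\Theta^\ast h_1+\Delta h_2\|^2_{L^2},
\]
where $P_+(\Theta^\ast h_1+\Delta h_2)=0$ for $h\in\mathcal H(\Theta)$ ($P_+$ being the orthogonal projection of $L^2(\mathcal E)$ onto $H^2(\mathcal E)$, and $P_-=I-P_+$). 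The intertwining relation $\Theta\Delta=\Delta_\ast\Theta$ and its adjoint $\Delta\Theta^\ast=\Theta^\ast\Delta_\ast$ show that, with $u=\Delta_\ast h_1-\Theta h_2$ and $f=\Theta^\ast h_1+\Delta h_2$, the assignment
\[
\begin{pmatrix}u\\ f\end{pmatrix}=\begin{pmatrix}\Delta_\ast&-\Theta\\ \Theta^\ast&\Delta\end{pmatrix}\begin{pmatrix}h_1\\ h_2\end{pmatrix}
\]
is a pointwise unitary change of variables with inverse $h_1=\Delta_\ast u+\Theta f$, $h_2=-\Theta^\ast u+\Delta f$; hence $\|h\|^2_{L^2}=\|u\|^2_{L^2}+\|f\|^2_{L^2}$ and $\langle h,h\rangle=\|u\|^2_{L^2}$. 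Since $f\in L^2(\mathcal E)\ominus H^2(\mathcal E)$, the criterion becomes: $T$ is similar to an isometry if and only if $\|f\|_{L^2}\le C\|u\|_{L^2}$ for some $C<\infty$ and all $h\in\mathcal H(\Theta)$.

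For the implication ``$\Theta$ left invertible $\Rightarrow$ $T$ similar to an isometry'' the estimate is then immediate. Let $G\in H^\infty(\mathcal E_\ast\to\mathcal E)$ with $G\Theta=I_{\mathcal E}$. From $h_1=\Delta_\ast u+\Theta f$ we get $\Theta f=h_1-\Delta_\ast u$, and applying $G$ gives $f=Gh_1-G\Delta_\ast u$. Here $Gh_1\in H^2(\mathcal E)$ is analytic while $f\perp H^2(\mathcal E)$, so projecting onto the coanalytic part yields $f=-P_-(G\Delta_\ast u)$, whence $\|f\|_{L^2}\le\|G\|_\infty\|u\|_{L^2}$. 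Together with $\|h\|^2=\|u\|^2+\|f\|^2$ this gives $\langle h,h\rangle=\|u\|^2\ge(1+\|G\|_\infty^2)^{-1}\|h\|^2$, and $T$ is similar to an isometry by the first paragraph.

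The reverse implication is the crux. Now only the uniform bound $\|f\|_{L^2}\le C\|u\|_{L^2}$ on $\mathcal H(\Theta)$ is given, and one must manufacture a bounded analytic left inverse $G$ of $\Theta$. My plan is: first observe that the bound forces $\Theta$ to be bounded below a.e.\ on $\mathbb T$; reduce by the inner--outer factorization $\Theta=\Theta_i\Omega$ (so that $\Omega$ is invertible in $H^\infty$ and it suffices to invert the inner factor $\Theta_i$, for which $\Delta_\ast(\zeta)$ is the orthogonal projection onto $\mathcal E_\ast\ominus\Theta_i(\zeta)\mathcal E$); and then read the estimate as the boundedness of the shift-covariant assignment $u\mapsto f$, represent it by an $H^\infty$ symbol via a commutant-lifting (Sarason-type) argument, and recover $G$ from that symbol, finally verifying $G\Theta=I$. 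I expect this last step to be the main obstacle: converting the global asymptotic lower bound into a genuine bounded analytic left inverse is an operator-corona-type construction that must use the analytic structure of $\mathcal H(\Theta)$ in an essential way, since (as the examples of the present paper show) the pointwise bound $\|\Theta(z)x\|\ge\delta\|x\|$ alone is insufficient.
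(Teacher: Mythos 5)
The paper does not prove this statement at all: Theorem~\ref{tha} is imported from \cite{15}, so there is no internal proof to compare your route against. Judged on its own terms, your argument establishes only one of the two implications. The reduction to the completely nonunitary case, the identification (via {\cite[Theorem 1]{11}}) of similarity to an isometry with a uniform lower bound $\lim_n\|T^nx\|^2\geq c^2\|x\|^2$, the model computation $\langle h,h\rangle=\|h\|^2-\|\Theta^\ast h_1+\Delta h_2\|^2_{L^2}=\|u\|^2_{L^2}$ via the pointwise unitary rotation, and the resulting estimate $\|f\|_{L^2}=\|P_-(G\Delta_\ast u)\|_{L^2}\leq\|G\|_\infty\|u\|_{L^2}$ are all correct. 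This proves that left invertibility of $\Theta_T$ implies similarity of $T$ to an isometry, which happens to be the only direction the present paper actually invokes (directly in Corollary~\ref{cor4.2}, and in contrapositive form in Corollary~\ref{cor5.10}).

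The converse implication, however, is not proved. You reduce it to representing the bounded shift-covariant assignment $u\mapsto f$ by an $H^\infty$ symbol and then extracting from that symbol a $G$ with $G\Theta=I$, and you yourself flag this as ``the main obstacle.'' That step is precisely the substance of \cite{15}: one lifts the invertible operator $X$ intertwining $T$ with the isometry to an operator between the minimal isometric dilations via the commutant lifting theorem, reads off from the lifted operator a pair of $H^\infty$-functions $(A,B)$ satisfying $A\Theta_T=\Theta_V B$ with $\Theta_V$ isometric-valued, and converts the invertibility of $X$ into left invertibility of $\Theta_T$. None of this is carried out, and it cannot be waved through: as the examples in Section~5 of this paper show, no pointwise information about $\Theta_T(z)$ can substitute for it. In addition, two intermediate claims in your sketch are asserted without proof: that the bound $\|f\|\leq C\|u\|$ forces $\Theta_T$ to be bounded below a.e.\ on $\mathbb T$ (true, but the clean way to see a lower bound is via Lemma~\ref{lem3.2}(i), since $b_\lambda(T)$ inherits a uniform lower bound from the similarity), and that one may pass to the inner factor --- for the latter you would still need to relate the model space of $\Theta_T$ to that of its inner factor. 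As it stands, the hard half of the equivalence is a plan, not a proof.
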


 Let $T$ be a completely nonunitary contraction. Then $T$ is
  of class $C_{1\cdot}$ if and only if $\Theta_T$ is $\ast$-outer, 
  and $T$ is of class $C_{\cdot 0}$,
if and only if $\Theta_T$ is inner {\cite[VI.3.5]{16}}. 

Recall that the {\it multiplicity} of an operator is the minimum dimension
of its reproducing subspaces. An operator is called {\it cyclic} if its multiplicity is equal to $1$.

\smallskip

The following theorem was proved in \cite{7}, \cite{10}, \cite{17}, \cite{24}, \cite{25}.

\begin{theoremcite}\label{thb} Let $T$ be a contraction, and let 
$1\leq n <\infty$. The following are equivalent:

$(1)$ $T\prec S_n$;

$(2)$ $T$ is of class $C_{10}$, $\dim\ker T^\ast =n$, and 
$I-T^\ast T \in\frak S_1$;

$(3)$ $\Theta_T$ is an inner $\ast$-outer function, $\Theta_T$ has a left 
scalar multiple, and $\dim\mathcal D_{T^\ast}\ominus\Theta_T(\lambda)\mathcal D_T=n$ 
for some $\lambda\in\mathbb D$.

Moreover, if $T$ is a contraction such that $T\prec S_n$, $1\leq n <\infty$, then the following are equivalent:

$(4)$ $T\sim S_n$;

$(5)$ multiplicity of $T$ is equal to $n$;

$(6)$ $\Theta_T$ has an outer left scalar multiple.\end{theoremcite}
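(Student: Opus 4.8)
The plan is to run everything through the Sz.-Nagy--Foias functional model together with the isometric asymptote of Section~2, using the ``dictionary'' that converts the asymptotic type of a completely nonunitary contraction into analytic properties of $\Theta_T$; concretely I would prove the cycle $(2)\Leftrightarrow(3)$, $(3)\Rightarrow(1)$, $(1)\Rightarrow(3)$, and then, under the standing hypothesis $T\prec S_n$, the cycle $(4)\Rightarrow(5)\Rightarrow(6)\Rightarrow(4)$. For $(2)\Leftrightarrow(3)$, a contraction of class $C_{10}$ is completely nonunitary, so I may invoke the facts recorded in Section~3: $T\in C_{\cdot 0}$ iff $\Theta_T$ is inner and $T\in C_{1\cdot}$ iff $\Theta_T$ is $\ast$-outer, whence $T\in C_{10}$ iff $\Theta_T$ is inner and $\ast$-outer. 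For the dimension count, \eqref{3.4} gives $\Theta_T(0)=-T|_{\mathcal D_T}$, so $\mathcal D_{T^\ast}\ominus\Theta_T(0)\mathcal D_T=\mathcal D_{T^\ast}\ominus T\mathcal D_T$, of dimension $\dim\mathcal H\ominus T\mathcal H=\dim\ker T^\ast$ by \eqref{3.2}; since for the operators at hand the cokernel dimension $\dim\mathcal D_{T^\ast}\ominus\Theta_T(\lambda)\mathcal D_T$ is independent of $\lambda\in\mathbb D$ (it is the nullity of $(\lambda I-T)^\ast$, constant on $\mathbb D$ for a $C_{10}$ operator), the clause ``$=n$ for some $\lambda$'' coincides with $\dim\ker T^\ast=n$. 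The one substantive point is that $I_{\mathcal H}-T^\ast T\in\frak S_1$ is equivalent to $\Theta_T$ having a left scalar multiple; this I would obtain from the determinant calculus for contractions with trace-class defect (in the spirit of the weak contractions of \cite{16}): by \eqref{3.3} the trace-class hypothesis makes $I-\Theta_T(z)^\ast\Theta_T(z)$ trace class and lets one form a scalar determinant $\rho\in H^\infty$ with an operator $G$ satisfying $G\Theta_T=\rho I_{\mathcal D_T}$, while conversely the identity $G\Theta_T=\rho I$ forces summability of the eigenvalues of $I-\Theta_T^\ast\Theta_T$, hence of $I_{\mathcal H}-T^\ast T$.

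For $(3)\Rightarrow(1)$ I would realize $T$ in the model $\mathcal H(\Theta_T)=H^2(\mathcal D_{T^\ast})\ominus\Theta_T H^2(\mathcal D_T)$ and construct a quasiaffinity $X\colon\mathcal H\to H^2_n$ with $XT=S_nX$: the $\ast$-outerness identifies the boundary defect bundle $\mathcal D_{T^\ast}\ominus\Theta_T(\zeta)\mathcal D_T$, of a.e. dimension $n$, with the fibre of $H^2_n$, and the scalar multiple is exactly what guarantees $\ker X=\{0\}$ and $\operatorname{clos}X\mathcal H=H^2_n$. Conversely, for $(1)\Rightarrow(3)$, if $T\prec S_n$ then $T\in C_{10}$ by the remark at the end of Section~1, so $\Theta_T$ is inner and $\ast$-outer; pushing the given quasiaffinity to the isometric asymptote exhibits $T\prec T_+^{(a)}$ with $T_+^{(a)}$ a unilateral shift, and comparing this shift with $S_n$ through the intertwiner, together with the determinant of $\Theta_T$, yields both the equality of the cokernel dimension with $n$ and the scalar multiple. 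By the already-proved $(2)\Leftrightarrow(3)$ this is the trace-class condition, closing the first cycle.

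For the second cycle I assume $T\prec S_n$. If $T\sim S_n$, then also $S_n\prec T$; since $R\prec Q$ forces $\operatorname{mult}(Q)\le\operatorname{mult}(R)$ (a reproducing system is carried forward along an intertwiner with dense range), the two relations squeeze the multiplicity of $T$ between $n$ and $n$, which gives $(4)\Rightarrow(5)$. For the remaining equivalences I would again use the scalar-multiple calculus: the inner factor of a scalar multiple $\rho$ of $\Theta_T$ measures the excess of $\operatorname{mult}(T)$ over $n$, so $\operatorname{mult}(T)=n$ is equivalent to $\rho$ being outer, giving $(5)\Leftrightarrow(6)$; and an outer $\rho$, through $G\Theta_T=\rho I$ with $\rho$ outer, produces a quasiaffinity realizing $S_n\prec T$, which with $(1)$ gives $T\sim S_n$, so $(6)\Rightarrow(4)$.

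I expect the main obstacle to be precisely the equivalence between the trace-class defect $I_{\mathcal H}-T^\ast T\in\frak S_1$ and the existence of a (respectively, outer) scalar multiple of $\Theta_T$: one must manufacture a genuine scalar $H^\infty$ function from the summability of the defect eigenvalues, controlling its inner--outer factorization, and conversely recover that summability from the purely algebraic identity $G\Theta_T=\rho I$. The companion difficulty is the bookkeeping of multiplicities, namely matching the finite $n=\dim\ker T^\ast$ with the analytic cokernel dimension rather than with the a priori larger multiplicity of the isometric asymptote $T_+^{(a)}$; it is exactly here that the finiteness of $n$ and the trace-class hypothesis are consumed.
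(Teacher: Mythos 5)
The paper does not prove Theorem~\ref{thb} at all: it is quoted as a known result, with the proofs attributed to \cite{7}, \cite{10}, \cite{17}, \cite{24}, \cite{25}, so there is no in-paper argument to measure you against. Judged on its own, your skeleton is reasonable in places --- the dictionary ``$T\in C_{10}$ iff $\Theta_T$ is inner and $\ast$-outer'', the identification $\dim\mathcal D_{T^\ast}\ominus\Theta_T(0)\mathcal D_T=\dim\mathcal H\ominus T\mathcal H=\dim\ker T^\ast$ via \eqref{3.4} and \eqref{3.2}, and the multiplicity squeeze for $(4)\Rightarrow(5)$ are all sound. But the two points you yourself flag as ``the main obstacle'' are exactly where the theorem lives, and the tools you propose for them do not work as described.

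First, the determinant calculus is unavailable here: since $\dim\mathcal D_{T^\ast}\ominus\Theta_T(\lambda)\mathcal D_T=n\geq 1$ for every $\lambda$, the operator $\Theta_T(\lambda)$ is never invertible, and $\sigma(T)=\overline{\mathbb D}$, so $T$ is not a weak contraction and there is no determinant of $\Theta_T$ to form; the actual arguments in \cite{17}, \cite{10}, \cite{7} must first complete $\Theta_T$ by an $n$-dimensional column to a square function, and the implication $(1)\Rightarrow I-T^\ast T\in\frak S_1$ (the deepest part of the theorem, and the one the paper's own Remark after Theorem~\ref{thb} relies on) is a separate, genuinely nontrivial argument rather than a formal consequence of a scalar multiple. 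Second, your converse claim that $G\Theta=\rho I$ ``forces summability of the eigenvalues of $I-\Theta^\ast\Theta$'' is false as stated: $\Theta=\theta I_{\ell^2}$ with $\theta$ a nonconstant inner function has the left scalar multiple $\rho=\theta$ (take $G=I$), yet $I-\Theta(z)^\ast\Theta(z)=(1-|\theta(z)|^2)I_{\ell^2}$ is not even compact; the remaining hypotheses of $(3)$ must be consumed, and you do not indicate how. Finally, the constancy in $\lambda$ of $\dim\ker(T-\lambda)^\ast$ is not a property of general $C_{10}$ contractions --- it requires $T-\lambda$ to be semi-Fredholm throughout $\mathbb D$, which you only know once the defect is compact (or once $(1)$ is available, via $b_\lambda(T)\prec S_n$), so your route to $(3)\Rightarrow(2)$ is circular at that step. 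As it stands the proposal is a plausible table of contents for the cited proofs, not a proof.
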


\smallskip

{\bf Remark.} If $T$ is a contraction and  $T\prec S_n$, $1\leq n <\infty$, then  $b_\lambda(T)$ is a contraction and 
$b_\lambda(T)\prec b_\lambda(S_n)\cong S_n$. Therefore, $I-b_\lambda(T)^\ast b_\lambda(T)\in\frak S_1$ for 
every $\lambda\in\mathbb D$. Here  $b_\lambda(T)=(T-\lambda)(I-\overline\lambda T)^{-1}$.

\smallskip

Let $T$ be a completely nonunitary contraction. 
Put \begin{equation}\label{3.5}\Delta_\ast(\zeta)=(I_{\mathcal D_{T^\ast}}-
\Theta_T(\zeta)\Theta_T(\zeta)^\ast)^{1/2}, \  \  \zeta\in\mathbb T, \ \   
\text{ and }\   \ \omega_T=\{\zeta\in\mathbb T: 
\Delta_\ast(\zeta) \neq \mathbb O\}.\end{equation}
 Then the unitary asymptote $T^{(a)}$ of a completely nonunitary contraction $T$
is unitarily equivalent to the operator of multiplication by the independent variable $\zeta$ on
$\operatorname{clos}\Delta_\ast L^2(\mathcal D_{T^\ast})$. In particular, $T^{(a)}$ is cyclic if and only if 
\begin{equation}\label{3.6}\dim \Delta_\ast(\zeta) \mathcal D_{T^\ast}\leq 1 \ \ \text{ for  a.e. } \ \zeta\in\mathbb T,
\end{equation} and in this case
$T^{(a)}\cong U(\omega_T)$ (see {\cite[Ch. IX.2]{16}}).
Also, if  the function $\Theta_T$ is inner and \eqref{3.6} holds, then 
\begin{equation}\label{3.7}\begin{aligned}\omega_T & = \{ \zeta\in\mathbb T: \  
\dim \mathcal D_{T^\ast}\ominus\Theta(\zeta)\mathcal D_T = 1 \} \\
\text{ and } \ \  \mathbb T\setminus\omega_T & =\{ \zeta\in\mathbb T:
 \  \Theta(\zeta)\mathcal D_T=\mathcal D_{T^\ast}\}.\end{aligned}\end{equation}

For $\lambda\in\mathbb D$ put $b_\lambda(z)=\frac{z-\lambda}{1-\overline\lambda z}$, $z\in\mathbb D$. 
Then $b_\lambda(T)=(T-\lambda)(I-\overline\lambda T)^{-1}$ is a contraction. For every $\lambda\in\mathbb D$ there exists
unitary operators 
$$V_\lambda\colon\mathcal D_T\to\mathcal D_{b_\lambda(T)} \ \ \ \text{ and } \ \ \ 
V_{\lambda\ast}\colon\mathcal D_{b_\lambda(T)^\ast}\to\mathcal D_{T^\ast}$$
such that 
\begin{equation}\label{3.8} V_{\lambda\ast}\Theta_{b_\lambda(T)}(z)V_\lambda=
\Theta_T(b_{-\lambda}(z)). \end{equation}
Setting $z=0$ in \eqref{3.4} and \eqref{3.8},  we conclude that 
\begin{equation}\label{3.9}\Theta_T(\lambda)=-V_{\lambda\ast}b_\lambda(T)V_\lambda \ \ \text{ for every } \ 
\lambda\in\mathbb D \end{equation}
({\cite[Ch. VI.1.3]{16}}).

The following lemma is a straightforward consequence of \eqref{3.2}, \eqref{3.3}, \eqref{3.9}, 
and Lemma \ref{lem3.1}.

\begin{lemma}\label{lem3.2}  Suppose $T\colon\mathcal H\to\mathcal H$ is a completely 
nonunitary contraction.

$\ \ \text{\rm (i)}$  Let $0<\delta\leq1$. Then $\|\Theta_T(\lambda)x\|\geq\delta\|x\|$ 
for every $x\in\mathcal D_T$, $\lambda\in\mathbb D$, 
if and only if $\|b_\lambda(T)x\|\geq\delta\|x\|$ for every 
$x\in\mathcal H$, $\lambda\in\mathbb D$.

$\ \text{\rm (ii)}$ $\dim\mathcal D_{T^\ast}\ominus\Theta_T(\lambda)\mathcal D_T=
\dim\mathcal H\ominus b_\lambda(T)\mathcal H$ 
for every $\lambda\in\mathbb D$.

$\text{\rm (iii)}$  $\|I_{\mathcal D_T}-
\Theta_T^\ast(\lambda)\Theta_T(\lambda)\|_{\frak S_1} = 
\|I_{\mathcal H}-b_\lambda(T)^\ast b_\lambda(T) \|_{\frak S_1}$
for every $\lambda\in\mathbb D$.\end{lemma}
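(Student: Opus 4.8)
The plan is to handle the three assertions by a single device: for each fixed $\lambda\in\mathbb D$, transport every quantity in the statement to the one contraction $R:=b_\lambda(T)$ on $\mathcal H$ via the unitaries of \eqref{3.9}. The first thing I would stress is the correct reading of \eqref{3.9}: it is the operator identity $\Theta_T(\lambda)=-V_{\lambda\ast}\,\big(R|_{\mathcal D_R}\big)\,V_\lambda$, where $R|_{\mathcal D_R}\colon\mathcal D_R\to\mathcal D_{R^\ast}$ is the restriction of $R$ to its defect space (recall $R\mathcal D_R\subset\mathcal D_{R^\ast}$), $V_\lambda\colon\mathcal D_T\to\mathcal D_R$ and $V_{\lambda\ast}\colon\mathcal D_{R^\ast}\to\mathcal D_{T^\ast}$ are unitary, and $\mathcal D_R=\mathcal D_{b_\lambda(T)}$, $\mathcal D_{R^\ast}=\mathcal D_{b_\lambda(T)^\ast}$. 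The minus sign and the two unitary factors are harmless for everything we must control (norms, ranges, and the product $\Theta_T^\ast\Theta_T$), so the whole lemma reduces to bookkeeping of unitary invariance together with the $R$-versions of \eqref{3.1}, \eqref{3.2}, \eqref{3.3} and of Lemma \ref{lem3.1}.

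For (i), since $V_\lambda,V_{\lambda\ast}$ are unitary we have $\|\Theta_T(\lambda)x\|=\|R(V_\lambda x)\|$ and $\|x\|=\|V_\lambda x\|$, and as $x$ runs through $\mathcal D_T$ the vector $V_\lambda x$ runs through $\mathcal D_R$. Hence, for each fixed $\lambda$, the inequality $\|\Theta_T(\lambda)x\|\geq\delta\|x\|$ on $\mathcal D_T$ is equivalent to $\|Ry\|\geq\delta\|y\|$ on $\mathcal D_R$, and Lemma \ref{lem3.1} applied to $R$ upgrades the latter to $\|Ry\|\geq\delta\|y\|$ on all of $\mathcal H$. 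Since this chain is an equivalence for each $\lambda$, quantifying over $\lambda\in\mathbb D$ on both sides gives (i).

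For (ii), the unitary $V_{\lambda\ast}$ carries $R\mathcal D_R$ onto $\Theta_T(\lambda)\mathcal D_T$ (the sign is irrelevant for ranges), so $\dim\mathcal D_{T^\ast}\ominus\Theta_T(\lambda)\mathcal D_T=\dim\mathcal D_{R^\ast}\ominus R\mathcal D_R$, and \eqref{3.2} applied to $R$ equates the right-hand side with $\dim\mathcal H\ominus R\mathcal H$. For (iii), I would first invoke the $R$-analogue of the decomposition preceding \eqref{3.3}, namely $I_{\mathcal H}-R^\ast R=(I_{\mathcal D_R}-R^\ast R|_{\mathcal D_R})\oplus\mathbb O_{\mathcal H\ominus\mathcal D_R}$, which both shows that $R^\ast R$ leaves $\mathcal D_R$ invariant and identifies $(R|_{\mathcal D_R})^\ast(R|_{\mathcal D_R})=R^\ast R|_{\mathcal D_R}$. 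Then \eqref{3.9} yields $I_{\mathcal D_T}-\Theta_T^\ast(\lambda)\Theta_T(\lambda)=V_\lambda^\ast\big(I_{\mathcal D_R}-R^\ast R|_{\mathcal D_R}\big)V_\lambda$, and unitary invariance of $\|\cdot\|_{\frak S_1}$ together with \eqref{3.3} applied to $R$ gives $\|I_{\mathcal D_T}-\Theta_T^\ast(\lambda)\Theta_T(\lambda)\|_{\frak S_1}=\|I_{\mathcal D_R}-R^\ast R|_{\mathcal D_R}\|_{\frak S_1}=\|I_{\mathcal H}-R^\ast R\|_{\frak S_1}$.

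I do not expect a genuine obstacle; the only points requiring a little care are reading \eqref{3.9} as a restriction identity (so that the domains of $V_\lambda$, $R|_{\mathcal D_R}$, $V_{\lambda\ast}$ match) and checking that $R^\ast R$ leaves $\mathcal D_R$ invariant, which is exactly what makes $(R|_{\mathcal D_R})^\ast(R|_{\mathcal D_R})$ coincide with $R^\ast R|_{\mathcal D_R}$ and lets \eqref{3.3} be applied verbatim.
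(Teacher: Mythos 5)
Your proposal is correct and follows exactly the route the paper intends: the paper gives no written proof, declaring the lemma a straightforward consequence of \eqref{3.2}, \eqref{3.3}, \eqref{3.9}, and Lemma \ref{lem3.1}, and your argument is precisely the transport to $R=b_\lambda(T)$ via the unitaries of \eqref{3.9} followed by applying those three facts to $R$. The two points you flag as needing care (reading \eqref{3.9} as a restriction identity and checking $R^\ast R\mathcal D_R\subset\mathcal D_R$) are handled correctly.
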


\section{On contractions similar to an isometry}

The following theorem is actually proved in {\cite[Theorem 2.1]{7}} (see also enlarged version on arXiv).

\begin{theorem}\label{th4.1} \cite{7} Suppose $T$ is a contraction 
with finite multiplicity, 
 and $T$ is similar to an isometry.
Then \begin{equation}\label{4.1}\sup_{\lambda\in\mathbb D}\|I-b_\lambda(T)^\ast b_\lambda(T)\|_{\frak S_1}
<\infty. \end{equation} \end{theorem}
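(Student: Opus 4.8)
The plan is to collapse everything to a single positive trace and then bound that trace uniformly in $\lambda$ using the isometric asymptote together with the finiteness of the multiplicity. Throughout write $B_\lambda=b_\lambda(T)$; by von Neumann's inequality $\|B_\lambda\|\le1$, so $I-B_\lambda^\ast B_\lambda\ge0$ and hence $\|I-B_\lambda^\ast B_\lambda\|_{\frak S_1}=\operatorname{tr}(I-B_\lambda^\ast B_\lambda)$. A direct computation from $b_\lambda(T)=(T-\lambda)(I-\overline\lambda T)^{-1}$ gives the identity
$$I-b_\lambda(T)^\ast b_\lambda(T)=(1-|\lambda|^2)(I-\lambda T^\ast)^{-1}(I-T^\ast T)(I-\overline\lambda T)^{-1},$$
so that $\operatorname{tr}(I-B_\lambda^\ast B_\lambda)=(1-|\lambda|^2)\|(I-T^\ast T)^{1/2}(I-\overline\lambda T)^{-1}\|_{\frak S_2}^2$. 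Thus it suffices to show this quantity stays bounded as $\lambda$ runs over $\mathbb D$.

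Next I would bring in the isometric asymptote. Since $T$ is similar to an isometry, $X=X_{T,+}$ is invertible, $\|X\|\le1$, and, writing $V=T_+^{(a)}$, one has $T=X^{-1}VX$, whence $b_\lambda(T)=X^{-1}b_\lambda(V)X$. Because $b_\lambda$ is inner, $W_\lambda:=b_\lambda(V)$ is again an isometry, and by the Wold decomposition $\dim(\mathcal H_+^{(a)}\ominus W_\lambda\mathcal H_+^{(a)})=m$, where $m$ is the multiplicity of the unilateral-shift part of $V$; since $T$, hence $V$, has finite multiplicity, $m<\infty$ and $m$ does not depend on $\lambda$. Via Lemma \ref{lem3.2}(ii) this $m$ equals $\dim\mathcal D_{T^\ast}\ominus\Theta_T(\lambda)\mathcal D_T$, the codimension occurring in Theorem \ref{thb}.

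The heart of the matter is then the following uniform estimate, which I would isolate as a lemma: if $B=Y^{-1}WY$ is a contraction with $W$ an isometry, $\|Y\|\le1$, $\|Y^{-1}\|\le C$, and $\dim\ker W^\ast=m$, then $\operatorname{tr}(I-B^\ast B)$ is bounded by a constant depending only on $C$ and $m$; applying it with $Y=X$, $W=W_\lambda$, $C=\|X^{-1}\|$ then finishes the proof, the bound being independent of $\lambda$. To prove the lemma I would set $P=YY^\ast$, $N=Y^{-\ast}Y^{-1}-I\ge0$ (so $0\le N\le(C^2-1)I$ and $cI\le P\le I$ with $c=C^{-2}$), and compute, using $W^\ast W=I$ and cyclicity of the trace, that $\operatorname{tr}(I-B^\ast B)=\operatorname{tr}\big(N(P-WPW^\ast)\big)$. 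Writing $E=I-P\ge0$ and $Q$ for the orthogonal projection onto $\ker W^\ast$ (so $\operatorname{rank}Q=m$), the defect identity $WW^\ast=I-Q$ yields the splitting $P-WPW^\ast=Q+(WEW^\ast-E)$, whose first term contributes $\operatorname{tr}(NQ)\le(C^2-1)m$.

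The main obstacle is the remaining term $\operatorname{tr}\big(N(WEW^\ast-E)\big)$, which cannot be disposed of by naive cancellation (every such manipulation turns out to be circular, both pieces having infinite trace). The genuinely new input is the monotonicity coming from contractivity: the operators $B^{\ast j}B^j$ decrease to the asymptotic limit, which is invertible because $B\approx W$; equivalently the terms $W^{\ast j}NW^j$ telescope, and summing $W^j(P-WPW^\ast)W^{\ast j}$ over $j$ exhibits $P-\lim_n W^nPW^{\ast n}$ as a convergent series whose mass, modulo the unitary part of $W$, is carried by the $m$-dimensional wandering subspace $\ker W^\ast$. Carrying this telescoping through, which is precisely the computation of \cite[Theorem 2.1]{7}, bounds the remaining term, and thus $\operatorname{tr}(I-B^\ast B)$, by a constant of the form $\mathrm{const}(C)\cdot m$. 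I expect this Wold-grading/telescoping estimate to be the only technical step; the reductions above are formal once the identity for $I-b_\lambda(T)^\ast b_\lambda(T)$ and the representation $b_\lambda(T)=X^{-1}b_\lambda(V)X$ are in hand.
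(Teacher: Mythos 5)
Your reduction is the same as the paper's: pass from $T$ to the isometry $V=T_+^{(a)}$ via the invertible canonical intertwining map, observe that $b_\lambda(V)$ is again an isometry whose Wold decomposition has the same finite multiplicities for every $\lambda$, and bound the trace of a positive defect by telescoping along that decomposition. The gap sits exactly at the step you call the heart of the matter. Having split $P-WPW^\ast=Q+(WEW^\ast-E)$, you are left with $\operatorname{tr}\bigl(N(WEW^\ast-E)\bigr)$, and you close it by asserting that the required telescoping ``is precisely the computation of \cite[Theorem 2.1]{7}'' --- but that computation \emph{is} the theorem you are being asked to prove, so as written the argument is circular. Moreover, the telescoping you actually sketch is attached to the wrong operator: $\sum_{j}W^{j}(P-WPW^{\ast})W^{\ast j}$ does telescope algebraically to $P-W^{n}PW^{\ast n}$, but $P-WPW^{\ast}$ is not positive, so neither convergence of the series nor any trace estimate follows from it.

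The step can be completed, and the paper does it in a few lines, by telescoping the \emph{positive} operator instead. From $B=Y^{-1}WY$ and $W^\ast W=I$ one gets $I-B^\ast B=Y^\ast\bigl(N-W^\ast NW\bigr)Y$ with your $N=(YY^\ast)^{-1}-I$; hence $N-W^\ast NW\ge0$ (this is your positivity observation, transported through the invertible $Y$), and $\operatorname{tr}(I-B^\ast B)\le\|Y\|^{2}\operatorname{tr}(N-W^\ast NW)$. Now compute $\operatorname{tr}(N-W^\ast NW)$ as the sum of diagonal entries in an orthonormal basis adapted to $W=b_\lambda(V)\cong S_k\oplus U_\ell$: along each orbit $e,We,W^{2}e,\dots$ the numbers $(NW^{n}e,W^{n}e)$ form a bounded decreasing sequence, so each of the $k+\ell$ orbits contributes a telescoping sum of nonnegative terms bounded by $\|N\|$, giving $\operatorname{tr}(N-W^\ast NW)\le(k+\ell)\|N\|$ uniformly in $\lambda$. (The paper runs this with a selfadjoint intertwiner $B$ and $A=B^{2}$ in place of $N$, which removes the need to juggle $P$, $E$, $Q$ at all.) Note also that the resulting constant involves $k+\ell$, the full multiplicity of $V$, not just $m=\dim\ker W^{\ast}=k$: the unitary summand contributes through the two-sided telescoping sum $\lim_{n\to-\infty}-\lim_{n\to+\infty}$, which need not vanish, so your claimed bound of the form $\mathrm{const}(C)\cdot m$ is not quite right, although finiteness is unaffected since $T$ has finite multiplicity. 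Finally, the opening identity for $I-b_\lambda(T)^\ast b_\lambda(T)$ and its $\frak S_2$ reformulation are correct but play no role in the rest of your argument.
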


\begin{corollary}\label{cor4.2}  Suppose $\mathcal E$, $\mathcal E_\ast$ are Hilbert spaces, 
$F\in H^\infty(\mathcal E\to\mathcal E_\ast)$ is an inner function, and 
$\dim \mathcal E_\ast\ominus F(\lambda)\mathcal E<\infty$ for some $\lambda \in\mathbb D$.
If $F$ is left invertible, then $F$ satisfies \eqref{1.7}.\end{corollary}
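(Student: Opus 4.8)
The plan is to reduce the statement about the operator-valued function $F$ to a statement about a single contraction and then invoke Theorem~\ref{th4.1}. First I would pass to the purely contractive case. Writing $F=W\oplus F_0$ with $W$ a unitary constant and $F_0$ purely contractive as in the Introduction, one checks that $W$ contributes nothing to $\dim\mathcal E_\ast\ominus F(\lambda)\mathcal E$, that $F$ is inner (resp. left invertible) if and only if $F_0$ is, and that $I_{\mathcal E}-F(z)^\ast F(z)=\mathbb O\oplus(I-F_0(z)^\ast F_0(z))$, so \eqref{1.7} for $F$ is equivalent to \eqref{1.7} for $F_0$. Hence I may assume $F$ is purely contractive, $\|F\|\le1$, inner, left invertible, and $\dim\mathcal E_\ast\ominus F(\lambda)\mathcal E=m<\infty$ for the given $\lambda$.

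Next I would realize $F$ as a characteristic function: by the converse correspondence recalled in Section~3 there is a completely nonunitary contraction $T$ with $\Theta_T=F$, under the identifications $\mathcal D_T=\mathcal E$ and $\mathcal D_{T^\ast}=\mathcal E_\ast$. Since $\Theta_T$ is inner, $T\in C_{\cdot0}$ by {\cite[VI.3.5]{16}}; since $\Theta_T$ is left invertible, $T$ is similar to an isometry $V$ by Theorem~\ref{tha}. The invertible intertwiner carries the property $\lim_n\|T^{\ast n}x\|=0$ (valid for all $x$ because $T\in C_{\cdot0}$) to the corresponding property of $V^\ast$, so $V$ is itself of class $C_{\cdot0}$; by the Wold decomposition such an isometry has trivial unitary part and is a unilateral shift. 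Thus $T\approx S_n$ for some $n$, $1\le n\le\infty$.

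The crux is to prove $n<\infty$. Similarity preserves multiplicity, so $T$ has multiplicity $n$, and it remains to identify $n$ with $m$. Let $X$ be invertible with $XT=S_nX$; then $b_\lambda(T)=X^{-1}b_\lambda(S_n)X$, and since $b_\lambda(S_n)\cong S_n$ (as noted in the Remark following Theorem~\ref{thb}) the adjoint $b_\lambda(T)^\ast$ is similar to $S_n^\ast$, whence $\dim\ker b_\lambda(T)^\ast=\dim\ker S_n^\ast=n$ for every $\lambda\in\mathbb D$. On the other hand $\mathcal H\ominus b_\lambda(T)\mathcal H=\ker b_\lambda(T)^\ast$, so Lemma~\ref{lem3.2}(ii) yields $n=\dim\ker b_\lambda(T)^\ast=\dim\mathcal D_{T^\ast}\ominus\Theta_T(\lambda)\mathcal D_T=\dim\mathcal E_\ast\ominus F(\lambda)\mathcal E=m<\infty$. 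Therefore $T$ is a contraction of finite multiplicity that is similar to an isometry.

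Finally I would apply Theorem~\ref{th4.1} to $T$, obtaining $\sup_{\lambda\in\mathbb D}\|I-b_\lambda(T)^\ast b_\lambda(T)\|_{\frak S_1}<\infty$, and then translate back by Lemma~\ref{lem3.2}(iii), which gives $\|I_{\mathcal H}-b_\lambda(T)^\ast b_\lambda(T)\|_{\frak S_1}=\|I_{\mathcal D_T}-\Theta_T(\lambda)^\ast\Theta_T(\lambda)\|_{\frak S_1}=\|I_{\mathcal E}-F(\lambda)^\ast F(\lambda)\|_{\frak S_1}$ for every $\lambda$; taking the supremum is exactly \eqref{1.7}. The one genuinely delicate point is the finiteness of the multiplicity: everything hinges on identifying $n$ with $m=\dim\mathcal E_\ast\ominus F(\lambda)\mathcal E$ via the M\"obius transform $b_\lambda(T)$, after the structural step that $T\in C_{\cdot0}$ together with similarity to an isometry upgrades to similarity to a pure shift $S_n$.
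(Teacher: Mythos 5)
Your proposal is correct and follows essentially the same route as the paper's own proof: realize $F$ as the characteristic function of a contraction $T\in C_{\cdot0}$, use Theorem~\ref{tha} to get $T\approx V$ with $V$ necessarily a unilateral shift, identify its (finite) multiplicity with $\dim\mathcal E_\ast\ominus F(\lambda)\mathcal E$ via Lemma~\ref{lem3.2}(ii), and conclude by Theorem~\ref{th4.1} and Lemma~\ref{lem3.2}(iii). The only cosmetic difference is that you spell out the reduction to the purely contractive case and the multiplicity count through $\ker b_\lambda(T)^\ast$, steps the paper leaves implicit.
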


\begin{proof} Let $\mathcal H$ be a Hilbert space, and let $T\colon\mathcal H\to\mathcal H$ be 
a contraction such that $\Theta_T=F$, where $\Theta_T$ is the characteristic function 
of $T$ (see {\cite[Ch. VI.3]{16}}). Since $F$ is inner, $T\in C_{\cdot 0}$, see {\cite[VI.3.5]{16}}. 
 By Lemma \ref{lem3.2}(ii), 
$$\dim \mathcal H\ominus b_\lambda(T)\mathcal H = \dim \mathcal E_\ast\ominus F(\lambda)\mathcal E<\infty.$$
Now suppose that $F$ is left invertible. Then, by Theorem \ref{tha}, there exist a Hilbert space 
$\mathcal K$ and an  isometry $V\colon\mathcal K\to\mathcal K$ such that $T\approx V$. Since $T\in C_{\cdot 0}$, 
$V$ is a unilateral shift, and the multiplicity of $V$ is equal to
$$\dim \mathcal K\ominus b_\lambda(V)\mathcal K = \dim \mathcal H\ominus b_\lambda(T)\mathcal H <\infty.$$
By Theorem \ref{th4.1}, $T$ satisfies \eqref{4.1}. By Lemma \ref{lem3.2}(iii),  $F$ satisfies \eqref{1.7}.  \end{proof} 

\section{Subnormal contractions}

Operators that are considered in this sections are subnormal ones, and are studied by 
many authors, the reader can consult with   the book \cite{6}.

 Let $\nu$ be
 a positive finite Borel measure on the closed unit disk $\overline{\mathbb D}$. Denote by 
$P^2(\nu)$ the closure of analytic polynomials in $L^2(\nu)$, and by $S_\nu$
the operator of multiplication by the independent variable in $P^2(\nu)$, i.e.
\begin{align*} & S_\nu: P^2(\nu)\to P^2(\nu), \\ & (S_\nu f)(z)=zf(z) \text{ for a.e. } z\in \overline{\mathbb D}
\ \text{ with respect to } \nu , \ \ f\in P^2(\nu).\end{align*}
Clearly, $S_\nu$ is a contraction.  

Recall that $m$ is the Lebesgue measure on $\mathbb T$. If $\nu=m$, then $S_\nu$ 
is the unilateral shift of multiplicity 1, it is denoted by $S$ in this section.

The following lemma is a straightforward consequence of the construction of the 
isometric asymptote of a contraction from \cite{11}, see Sec. 2 of this paper, so its proof is omitted.

\begin{lemma}\label{lem5.1} Suppose $\nu$ is a positive finite Borel measure on 
$\overline{\mathbb D}$, $\mathcal H=P^2(\nu)$, and $T=S_\nu$.
Then $\mathcal H_{0,T}=\{f\in P^2(\nu): \ f=0 \ \text {a.e. on}\ \ \mathbb T 
 \text { with respect to}\ \nu\}$, $\mathcal H_+^{(a)}=P^2(\nu|_{\mathbb T})$,
$$X_{T,+}\colon P^2(\nu)\to P^2(\nu|_{\mathbb T}), \ \ \ X_{T,+}f=f|_{\mathbb T}, \ \ f\in P^2(\nu),$$
 is the natural imbedding, and $T_+^{(a)}=S_{\nu|_{\mathbb T}}$.\end{lemma}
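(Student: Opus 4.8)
The plan is to compute the limiting semi-inner product from the Sec.~2 construction, $\langle f,g\rangle=\lim_{n\to\infty}(T^nf,T^ng)$, in closed form, and then to read off all four assertions from that formula. Since $T=S_\nu$ acts by $(S_\nu^nf)(z)=z^nf(z)$, for $f,g\in P^2(\nu)$ one has
\begin{equation*}(T^nf,T^ng)=\int_{\overline{\mathbb D}}|z|^{2n}f(z)\overline{g(z)}\,d\nu(z).\end{equation*}
The integrand is dominated by $|f\overline g|\in L^1(\nu)$, while $|z|^{2n}\to 0$ for $|z|<1$ and $|z|^{2n}=1$ for $|z|=1$, so $|z|^{2n}$ converges pointwise on $\overline{\mathbb D}$ to the indicator of $\mathbb T$. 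By dominated convergence,
\begin{equation*}\langle f,g\rangle=\int_{\mathbb T}f\overline g\,d\nu=(f|_{\mathbb T},g|_{\mathbb T})_{L^2(\nu|_{\mathbb T})}.\end{equation*}
Thus the limiting inner product is just the pullback, under the restriction map $f\mapsto f|_{\mathbb T}$, of the inner product of $L^2(\nu|_{\mathbb T})$.

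From this single computation each assertion follows directly. First, $\mathcal H_{0,T}=\{f:\langle f,f\rangle=0\}$ is the null set of the seminorm, that is, the $f\in P^2(\nu)$ with $\int_{\mathbb T}|f|^2\,d\nu=0$, which is precisely the set of functions vanishing $\nu$-a.e. on $\mathbb T$; this gives the description of $\mathcal H_{0,T}$. Next, the induced map $f+\mathcal H_{0,T}\mapsto f|_{\mathbb T}$ is by construction an isometry of $\mathcal H/\mathcal H_{0,T}$ into $L^2(\nu|_{\mathbb T})$. Since restriction is contractive, $\int_{\mathbb T}|f|^2\,d\nu\le\int_{\overline{\mathbb D}}|f|^2\,d\nu$, and analytic polynomials are dense in $P^2(\nu)$, the image of this isometry has the same closure as $\{p|_{\mathbb T}:p\text{ a polynomial}\}$, which is $P^2(\nu|_{\mathbb T})$ by definition. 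Hence the completion $\mathcal H_+^{(a)}$ is identified with $P^2(\nu|_{\mathbb T})$ and $X_{T,+}$ with the natural imbedding $f\mapsto f|_{\mathbb T}$. Finally, under this identification $T_+^{(a)}$ sends $f|_{\mathbb T}$ to $(zf)|_{\mathbb T}=z\cdot f|_{\mathbb T}$, i.e.\ it is multiplication by the independent variable on $P^2(\nu|_{\mathbb T})$, namely $S_{\nu|_{\mathbb T}}$.

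There is no real obstacle here: the whole content is the dominated-convergence limit above, which is why the statement is offered as a straightforward consequence of the construction in Sec.~2. The only point that deserves a word of care is the final identification of the completion: one must confirm that the restriction map is well defined on all of $P^2(\nu)$, not merely on polynomials, and that its range is dense in $P^2(\nu|_{\mathbb T})$. Both follow at once from the contractivity of restriction together with the density of polynomials, so once the limit formula is in hand the remaining verifications are routine.
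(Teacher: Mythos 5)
Your proof is correct and is exactly the verification the paper has in mind: the paper omits the proof as a ``straightforward consequence'' of the Sec.~2 construction, and your dominated-convergence computation of $\langle f,g\rangle=\int_{\mathbb T}f\overline g\,d\nu$, followed by the identification of the null space, the completion, and the induced isometry, supplies precisely that omitted routine argument. No gaps.
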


The proof of the following lemma is obvious and omitted.

\begin{lemma}\label{lem5.2} Suppose $\nu$ is a positive finite Borel measure on 
$\overline{\mathbb D}$, and $f\in P^2(\nu)$. Then there exists $\lambda\in \mathbb D$
such that $\|b_\lambda f\|=\|f\|$ if and only if $f(z)=0$ for a.e. $z\in\mathbb D$ 
with respect to $\nu$.\end{lemma}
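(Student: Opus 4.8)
The plan is to reduce everything to the elementary Möbius identity
$$1-|b_\lambda(z)|^2=\frac{(1-|\lambda|^2)(1-|z|^2)}{|1-\overline\lambda z|^2},\qquad z\in\overline{\mathbb D},$$
which holds for every $\lambda\in\mathbb D$. First I would note that since $b_\lambda$ extends analytically across $\overline{\mathbb D}$ (its pole $1/\overline\lambda$ lies outside $\overline{\mathbb D}$) and $|b_\lambda|\le1$ there, the function $b_\lambda f$ again belongs to $P^2(\nu)$ and $\|b_\lambda f\|^2=\int_{\overline{\mathbb D}}|b_\lambda(z)|^2|f(z)|^2\,d\nu(z)$. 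Subtracting from $\|f\|^2=\int_{\overline{\mathbb D}}|f(z)|^2\,d\nu(z)$ and inserting the identity above yields
$$\|f\|^2-\|b_\lambda f\|^2=\int_{\overline{\mathbb D}}\bigl(1-|b_\lambda(z)|^2\bigr)|f(z)|^2\,d\nu(z)=\int_{\overline{\mathbb D}}\frac{(1-|\lambda|^2)(1-|z|^2)}{|1-\overline\lambda z|^2}\,|f(z)|^2\,d\nu(z).$$

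The crucial structural observation is then that, for fixed $\lambda\in\mathbb D$, the kernel $k_\lambda(z):=(1-|\lambda|^2)(1-|z|^2)/|1-\overline\lambda z|^2$ is nonnegative on all of $\overline{\mathbb D}$, is strictly positive precisely on $\mathbb D$ (where $1-|z|^2>0$), and vanishes exactly on $\mathbb T$ (where $|z|=1$). Consequently $\|f\|^2-\|b_\lambda f\|^2\ge0$ always, and this difference is zero if and only if the nonnegative integrand $k_\lambda|f|^2$ vanishes $\nu$-a.e.

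From here both implications are immediate. For the ``if'' direction, if $f=0$ $\nu$-a.e. on $\mathbb D$, then $k_\lambda|f|^2=0$ $\nu$-a.e.\ (the factor $|f|^2$ kills the part of the support lying in $\mathbb D$, while $k_\lambda\equiv0$ on $\mathbb T$), so $\|b_\lambda f\|=\|f\|$ for \emph{every} $\lambda\in\mathbb D$. For the ``only if'' direction, if $\|b_\lambda f\|=\|f\|$ for some particular $\lambda\in\mathbb D$, then $k_\lambda|f|^2=0$ $\nu$-a.e.; since $k_\lambda>0$ throughout $\mathbb D$, this forces $f=0$ $\nu$-a.e.\ on $\mathbb D$, as claimed.

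There is essentially no serious obstacle here: the entire content is the Möbius identity together with the sign of $1-|z|^2$ on $\mathbb D$ versus $\mathbb T$. The only points requiring minor care are the measure-theoretic bookkeeping, namely splitting $\overline{\mathbb D}=\mathbb D\cup\mathbb T$ as a disjoint Borel decomposition and translating ``the integrand vanishes $\nu$-a.e.'' into ``$f=0$ $\nu$-a.e.\ on $\mathbb D$'', together with the preliminary remark that $b_\lambda f\in P^2(\nu)$, which follows because $1/(1-\overline\lambda z)=\sum_n(\overline\lambda z)^n$ converges uniformly on $\overline{\mathbb D}$, so $b_\lambda$ is a uniform limit of polynomials there.
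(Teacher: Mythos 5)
Your proof is correct: the identity $1-|b_\lambda(z)|^2=(1-|\lambda|^2)(1-|z|^2)/|1-\overline\lambda z|^2$ shows that $\|f\|^2-\|b_\lambda f\|^2$ is the integral of a kernel that is strictly positive on $\mathbb D$ and vanishes on $\mathbb T$, which gives both implications at once. The paper declares this proof ``obvious'' and omits it, and your argument is exactly the intended one, including the preliminary observation that $b_\lambda f\in P^2(\nu)$ because $b_\lambda$ is a uniform limit of polynomials on $\overline{\mathbb D}$.
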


\begin{corollary}\label{cor5.3} Suppose $\nu$ is a positive finite Borel measure on 
$\overline{\mathbb D}$, and $\lambda\in \mathbb D$. Then 
$$ P^2(\nu)\ominus\mathcal D_{b_\lambda(S_\nu)}\subset
\{f\in P^2(\nu):\ f(z)=0 \ \text{ for a.e. }\ z\in\mathbb D 
\ \text{ with respect to }  \nu\}$$
and
$$P^2(\nu)\ominus\mathcal D_{b_\lambda(S_\nu)^\ast}\subset
\{f\in P^2(\nu):\ f(z)=0 \ \text{ for a.e. }\ z\in\mathbb D 
\ \text{ with respect to }  \nu\}.$$\end{corollary}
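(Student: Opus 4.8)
The plan is to derive Corollary~\ref{cor5.3} directly from Lemma~\ref{lem5.2} by identifying the orthogonal complements $P^2(\nu)\ominus\mathcal D_{b_\lambda(S_\nu)}$ and $P^2(\nu)\ominus\mathcal D_{b_\lambda(S_\nu)^\ast}$ as sets on which the contraction $b_\lambda(S_\nu)$ (respectively its adjoint) acts isometrically, and then invoking Lemma~\ref{lem5.2} to conclude that such functions must vanish a.e.\ on $\mathbb D$ with respect to $\nu$. The key observation is the general fact \eqref{3.1}: for any contraction $C$ and any $f\in\mathcal H\ominus\mathcal D_C$, one has $\|Cf\|=\|f\|$. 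Applying this with $C=b_\lambda(S_\nu)$ gives, for $f\in P^2(\nu)\ominus\mathcal D_{b_\lambda(S_\nu)}$, the norm equality $\|b_\lambda(S_\nu)f\|=\|f\|$, i.e.\ $\|b_\lambda f\|=\|f\|$ since $b_\lambda(S_\nu)$ is multiplication by $b_\lambda$ on $P^2(\nu)$.

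First I would record that $b_\lambda(S_\nu)$ really is the operator of multiplication by the scalar inner function $b_\lambda$ on $P^2(\nu)$: since $S_\nu$ is multiplication by $z$, the rational function $b_\lambda(z)=(z-\lambda)(1-\overline\lambda z)^{-1}$ evaluated at the operator $S_\nu$ is multiplication by $b_\lambda(z)$, which is bounded and analytic on a neighborhood of $\overline{\mathbb D}$, so the functional calculus is just pointwise multiplication. Then for $f\in P^2(\nu)\ominus\mathcal D_{b_\lambda(S_\nu)}$, the defect condition together with \eqref{3.1} yields $\|b_\lambda f\|_{L^2(\nu)}=\|f\|_{L^2(\nu)}$. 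By Lemma~\ref{lem5.2} applied to this $\lambda$, the equality $\|b_\lambda f\|=\|f\|$ forces $f(z)=0$ for a.e.\ $z\in\mathbb D$ with respect to $\nu$, which is exactly the first inclusion.

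For the second inclusion I would argue analogously but with the adjoint defect space. Here \eqref{3.1} applied to the contraction $b_\lambda(S_\nu)^\ast$ shows that any $f\in P^2(\nu)\ominus\mathcal D_{b_\lambda(S_\nu)^\ast}$ satisfies $\|b_\lambda(S_\nu)^\ast f\|=\|f\|$; moreover \eqref{3.1} also gives $f=b_\lambda(S_\nu)b_\lambda(S_\nu)^\ast f$, so $f=b_\lambda\,(b_\lambda(S_\nu)^\ast f)$ lies in the range of multiplication by $b_\lambda$. Writing $g=b_\lambda(S_\nu)^\ast f$ we have $f=b_\lambda g$ with $\|g\|=\|f\|=\|b_\lambda g\|$, so Lemma~\ref{lem5.2} applies to $g$ and gives $g(z)=0$ a.e.\ on $\mathbb D$ with respect to $\nu$; since $f=b_\lambda g$, the same vanishing holds for $f$ on $\mathbb D$, establishing the second inclusion.

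The main obstacle I anticipate is making the adjoint step fully rigorous: the operator $b_\lambda(S_\nu)^\ast$ is not multiplication by any analytic function, so I cannot directly describe $b_\lambda(S_\nu)^\ast f$ as a pointwise product, and I must instead lean on the isometry relation $f=b_\lambda(S_\nu)b_\lambda(S_\nu)^\ast f$ coming from \eqref{3.1} to re-express $f$ as an \emph{analytic} multiple of something to which Lemma~\ref{lem5.2} applies. The subtlety is that \eqref{3.1} is stated for $x\in\mathcal H\ominus\mathcal D_C$ and gives $x=C^\ast Cx$; applying it in the form needed for the adjoint requires the symmetric statement $x=CC^\ast x$ for $x\in\mathcal H\ominus\mathcal D_{C^\ast}$, which is just \eqref{3.1} with $C$ replaced by $C^\ast$. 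Once that substitution is made carefully, both inclusions reduce cleanly to Lemma~\ref{lem5.2}, and no further estimates are needed.
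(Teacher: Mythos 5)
Your argument is correct, and for the first inclusion it coincides with the paper's proof: identify $b_\lambda(S_\nu)$ as multiplication by $b_\lambda$, use \eqref{3.1} to get $\|b_\lambda f\|=\|f\|$, and invoke Lemma~\ref{lem5.2}. For the second inclusion you take a slightly different (but equally valid and equally elementary) route. The paper computes the adjoint explicitly as $b_\lambda(S_\nu)^\ast f=P_+(\overline{b_\lambda}f)$, where $P_+$ is the orthogonal projection of $L^2(\nu)$ onto $P^2(\nu)$, and then chains the inequalities $\|f\|=\|P_+(\overline{b_\lambda}f)\|\leq\|\overline{b_\lambda}f\|=\|b_\lambda f\|\leq\|f\|$ to apply Lemma~\ref{lem5.2} directly to $f$. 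You instead avoid any concrete description of the adjoint by using the identity $f=b_\lambda(S_\nu)b_\lambda(S_\nu)^\ast f$ from \eqref{3.1} (applied to $b_\lambda(S_\nu)^\ast$), writing $f=b_\lambda g$ with $g=b_\lambda(S_\nu)^\ast f$ and $\|g\|=\|f\|=\|b_\lambda g\|$, applying Lemma~\ref{lem5.2} to $g$, and then transferring the vanishing to $f=b_\lambda g$. Your version buys independence from the explicit form of $S_\nu^\ast$ (it works verbatim for any contraction that is multiplication by a bounded analytic function on a $P^2(\nu)$-type space), while the paper's version is marginally shorter because it never needs the operator identity $f=CC^\ast f$; both rest on exactly the same two ingredients, \eqref{3.1} and Lemma~\ref{lem5.2}.
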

\begin{proof} Let $P_+:L^2(\nu)\to P^2(\nu)$ be the orthogonal projection. 
It is easy to see that $(b_\lambda(S_\nu)f)(z)=b_\lambda(z)f(z)$ for a.e. $z\in \overline{\mathbb D}$ 
with respect to $\nu$, and $b_\lambda(S_\nu)^\ast f=P_+(\overline b_\lambda f)$, $f\in P^2(\nu)$.
If $f\in P^2(\nu)\ominus\mathcal D_{b_\lambda(S_\nu)}$, then, by \eqref{3.1}, $\|f\|=\| b_\lambda f\|$, 
and, by Lemma \ref{lem5.2}, $f(z)=0$ for a.e. $z\in\mathbb D$ with respect to $\nu$.
If $f\in P^2(\nu)\ominus\mathcal D_{b_\lambda(S_\nu)^\ast}$, then, by \eqref{3.1}, 
$\|f\|=\| P_+(\overline b_\lambda f)\|\leq\|\overline b_\lambda f\|=\|b_\lambda f\|$, 
and, by Lemma \ref{lem5.2}, $f(z)=0$ for a.e. $z\in\mathbb D$ with respect to $\nu$. \end{proof}

Denote by $m_2$ the normalized Lebesgue measure on the unit disk $\mathbb D$, for $-1<\alpha<\infty$
put $\text{\rm d}A_\alpha(z)=(\alpha+1)(1-|z|^2)^\alpha\text{\rm d}m_2(z)$. It is well known that 
the Bergman space $P^2(A_\alpha)$ has the following properties:
$f\in P^2(A_\alpha)$ if and only $f$ is an analytic function in $\mathbb D$ and $f\in L^2(A_\alpha)$, 
the functional $f\mapsto f(z)$ is bounded on $P^2(A_\alpha)$ for every $z\in\mathbb D$, 
and there exists a constant $C_\alpha>0$ (which depends on $\alpha$) such that
\begin{equation}\label{5.1} |f(z)|\leq C_\alpha\frac{\|f\|_{P^2(A_\alpha)}}{(1-|z|^2)^{1+\alpha/2}}, 
\ \ \ z\in\mathbb D, \end{equation}
(see, for example, {\cite[Sec. 1.1 and 1.2]{9}}). It is easy to see that 
$S_{A_\alpha}\in C_{00}$.


\begin{lemma}\label{lem5.4} {\cite[Lemma 4.2]{3}}  Let $-1<\alpha<\infty$. 
Then for every $f\in P^2(A_\alpha)$ and $\lambda\in\mathbb D$
$$\int_{\mathbb D}|b_\lambda f|^2\text{\rm d}A_\alpha\geq 
\frac{1}{\alpha+2}\int_{\mathbb D}|f|^2\text{\rm d}A_\alpha.$$\end{lemma}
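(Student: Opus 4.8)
The plan is to reduce the general inequality to the special case $\lambda=0$ by a Möbius change of variables, and then to settle the case $\lambda=0$ by an explicit computation with monomials. First I would set up the change of variables. Writing $b_{-\lambda}$ for the inverse of $b_\lambda$ (so that $b_{-\lambda}\circ b_\lambda=\mathrm{id}$), I would associate to $f\in P^2(A_\alpha)$ the function
$$g=(f\circ b_{-\lambda})\cdot(b_{-\lambda}')^{(\alpha+2)/2},$$
where a holomorphic branch of the power is fixed; this is harmless, since $b_{-\lambda}'$ is zero-free on $\mathbb D$ and only $|b_{-\lambda}'|$ will actually enter the norm estimates. The two facts I would record are the Jacobian relation $\mathrm{d}m_2(w)=|b_\lambda'(z)|^2\,\mathrm{d}m_2(z)$ for $w=b_\lambda(z)$, and the standard Möbius identity $1-|b_\lambda(z)|^2=|b_\lambda'(z)|\,(1-|z|^2)$, equivalent to $|b_\lambda'(z)|=(1-|\lambda|^2)/|1-\overline\lambda z|^2$.

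Next I would substitute $z=b_{-\lambda}(w)$ in the integral $\int_{\mathbb D}|w|^2|g(w)|^2\,\mathrm dA_\alpha(w)$. Using $|b_{-\lambda}'(b_\lambda(z))|=1/|b_\lambda'(z)|$ (chain rule) together with the two identities above, all the resulting powers of $|b_\lambda'|$ collect to the exponent $-(\alpha+2)+\alpha+2=0$ and cancel, which is exactly why the power $(\alpha+2)/2$ is the correct one. This computation yields simultaneously $\|g\|=\|f\|$, i.e. the map $f\mapsto g$ is isometric on $P^2(A_\alpha)$, and the key identity
$$\int_{\mathbb D}|w|^2|g(w)|^2\,\mathrm dA_\alpha(w)=\int_{\mathbb D}|b_\lambda(z)|^2|f(z)|^2\,\mathrm dA_\alpha(z).$$
Hence the left-hand side of the lemma equals $\|zg\|^2$ and the claimed right-hand side equals $\tfrac{1}{\alpha+2}\|g\|^2$, so everything reduces to proving $\|zg\|^2\geq\tfrac{1}{\alpha+2}\|g\|^2$ for all $g\in P^2(A_\alpha)$.

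Finally I would settle this base case by expanding $g=\sum_n a_nz^n$ in the orthogonal basis of monomials. A polar-coordinate computation gives $c_n:=\|z^n\|^2=(\alpha+1)B(n+1,\alpha+1)$, whence the ratio $c_{n+1}/c_n=\tfrac{n+1}{n+\alpha+2}$. Since $\alpha+1>0$, this ratio is increasing in $n$ and attains its minimum $\tfrac{1}{\alpha+2}$ at $n=0$; therefore $\|zg\|^2=\sum_n|a_n|^2c_{n+1}\geq\tfrac{1}{\alpha+2}\sum_n|a_n|^2c_n=\tfrac{1}{\alpha+2}\|g\|^2$, with equality precisely for constants, which shows the constant is sharp.

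The genuinely routine parts are the two Möbius identities and the Beta-integral evaluation of $c_n$. The one step demanding care is the bookkeeping in the change of variables, namely verifying that the powers of the Jacobian factor cancel exactly; this is what forces the exponent $(\alpha+2)/2$ and, through the lowest monomial, produces the sharp constant $\tfrac{1}{\alpha+2}$. I expect this to be the main, though not severe, obstacle.
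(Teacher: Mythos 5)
Your argument is correct. Note first that the paper gives no proof of this lemma at all: it is quoted with the citation \cite[Lemma 4.2]{3} (Aleman--Richter--Sundberg), so there is no in-paper argument to compare against; what you have produced is a complete, self-contained, elementary substitute. Both halves check out. The weighted composition map $f\mapsto (f\circ b_{-\lambda})\,(b_{-\lambda}')^{(\alpha+2)/2}$ is indeed an isometry of $P^2(A_\alpha)$ onto itself: under $w=b_\lambda(z)$ the exponents of $|b_\lambda'(z)|$ coming from the Jacobian, from $(1-|w|^2)^\alpha=|b_\lambda'(z)|^\alpha(1-|z|^2)^\alpha$, and from the factor $|b_{-\lambda}'(w)|^{\alpha+2}=|b_\lambda'(z)|^{-(\alpha+2)}$ sum to $2+\alpha-(\alpha+2)=0$, and the same substitution converts $\int|w|^2|g|^2\,\mathrm{d}A_\alpha$ into $\int|b_\lambda|^2|f|^2\,\mathrm{d}A_\alpha$, so the lemma reduces to $\|zg\|^2\ge\tfrac{1}{\alpha+2}\|g\|^2$. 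For that base case your monomial norms $c_n=n!\,\Gamma(\alpha+2)/\Gamma(n+\alpha+2)$ agree with the moments $v_n$ that the paper itself computes in the proof of Lemma \ref{lem5.8}, and the ratio $c_{n+1}/c_n=\tfrac{n+1}{n+\alpha+2}=1-\tfrac{\alpha+1}{n+\alpha+2}$ is increasing in $n$ precisely because $\alpha+1>0$, with minimum $\tfrac{1}{\alpha+2}$ at $n=0$; summing over the orthogonal expansion gives the inequality with the sharp constant. The only points worth making explicit in a written version are the identification of $P^2(A_\alpha)$ with the full weighted Bergman space (stated in the paper before \eqref{5.1}), which is what guarantees that $g$ again lies in $P^2(A_\alpha)$ and that the monomials form an orthogonal basis, and the existence of a holomorphic branch of $(b_{-\lambda}')^{(\alpha+2)/2}$, which you correctly note is harmless since $b_{-\lambda}'$ is zero-free on the simply connected domain $\mathbb D$ and only its modulus enters the norms.
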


\begin{corollary}\label{cor5.5} Let $-1<\alpha<\infty$, and let $\mu$ be a positive finite Borel measure on $\mathbb T$.
Then for every $f\in P^2(A_\alpha+\mu)$ and $\lambda\in\mathbb D$
$$\int_{\overline{\mathbb D}}|b_\lambda f|^2\text{\rm d}(A_\alpha+\mu)\geq 
\frac{1}{\alpha+2}\int_{\overline{\mathbb D}}|f|^2\text{\rm d}(A_\alpha+\mu).$$\end{corollary}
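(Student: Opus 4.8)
The plan is to deduce Corollary~\ref{cor5.5} directly from Lemma~\ref{lem5.4} by splitting the measure $A_\alpha+\mu$ into its disk part $A_\alpha$ (supported on $\mathbb D$) and its circle part $\mu$ (supported on $\mathbb T$), and treating the two pieces separately. The key point that makes this work is that on $\mathbb T$ the factor $b_\lambda$ is unimodular: for $\zeta\in\mathbb T$ we have $|b_\lambda(\zeta)|=\bigl|\frac{\zeta-\lambda}{1-\overline\lambda\zeta}\bigr|=1$, so multiplication by $b_\lambda$ preserves the pointwise modulus $m$-a.e. (hence $\mu$-a.e., since $\mu$ lives on $\mathbb T$). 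Thus the circle integral is in fact an equality rather than merely an inequality, while the disk integral is controlled by Lemma~\ref{lem5.4}.

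Concretely, first I would observe that any $f\in P^2(A_\alpha+\mu)$ restricts to an element of $P^2(A_\alpha)$ (its values on $\mathbb D$ are determined by the analytic structure, and $\int_{\mathbb D}|f|^2\,\mathrm dA_\alpha\leq\int_{\overline{\mathbb D}}|f|^2\,\mathrm d(A_\alpha+\mu)<\infty$), so that Lemma~\ref{lem5.4} is applicable to the disk part. Then I would split
\begin{align*}
\int_{\overline{\mathbb D}}|b_\lambda f|^2\,\mathrm d(A_\alpha+\mu)
&=\int_{\mathbb D}|b_\lambda f|^2\,\mathrm dA_\alpha+\int_{\mathbb T}|b_\lambda f|^2\,\mathrm d\mu \\
&=\int_{\mathbb D}|b_\lambda f|^2\,\mathrm dA_\alpha+\int_{\mathbb T}|f|^2\,\mathrm d\mu,
\end{align*}
where the second equality uses $|b_\lambda|=1$ $\mu$-a.e.\ on $\mathbb T$. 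Applying Lemma~\ref{lem5.4} to the first term and using $\frac{1}{\alpha+2}\le 1$ on the second gives
$$\int_{\overline{\mathbb D}}|b_\lambda f|^2\,\mathrm d(A_\alpha+\mu)
\geq\frac{1}{\alpha+2}\int_{\mathbb D}|f|^2\,\mathrm dA_\alpha+\frac{1}{\alpha+2}\int_{\mathbb T}|f|^2\,\mathrm d\mu
=\frac{1}{\alpha+2}\int_{\overline{\mathbb D}}|f|^2\,\mathrm d(A_\alpha+\mu),$$
which is exactly the asserted inequality.

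There is no serious obstacle here; this is a short reduction. The only point requiring a word of care is the first step---that $f|_{\mathbb D}$ genuinely belongs to $P^2(A_\alpha)$ so that Lemma~\ref{lem5.4} applies. Since $P^2(A_\alpha+\mu)$ is by definition the $L^2(A_\alpha+\mu)$-closure of analytic polynomials and $A_\alpha\le A_\alpha+\mu$ as measures, the restriction map from $P^2(A_\alpha+\mu)$ into $P^2(A_\alpha)$ is a well-defined contraction, and $b_\lambda f$ restricted to $\mathbb D$ is then the pointwise product $b_\lambda\cdot f$ there, matching the left side of Lemma~\ref{lem5.4}. Everything else is the elementary splitting of the integral and the unimodularity of $b_\lambda$ on the circle.
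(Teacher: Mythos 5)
Your argument is correct and coincides with the paper's own proof: both split $\int_{\overline{\mathbb D}}|b_\lambda f|^2\,\mathrm d(A_\alpha+\mu)$ into the $A_\alpha$-part on $\mathbb D$ (handled by Lemma~\ref{lem5.4}) and the $\mu$-part on $\mathbb T$ (where $|b_\lambda|=1$), then use $1/(\alpha+2)\le 1$, after noting the inclusion $P^2(A_\alpha+\mu)\subset P^2(A_\alpha)$. No differences worth remarking on.
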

\begin{proof} Clearly, $P^2(A_\alpha+\mu)\subset P^2(A_\alpha)$. Let 
$f\in P^2(A_\alpha+\mu)$, and let $\lambda\in\mathbb D$. We have 
\begin{align*}\int_{\overline{\mathbb D}}  |b_\lambda &  f|^2\text{\rm d}(A_\alpha+\mu)  =
\int_{\mathbb D}|b_\lambda f|^2\text{\rm d}A_\alpha + 
\int_{\mathbb T}|b_\lambda f|^2\text{\rm d}\mu \\ &\geq 
\frac{1}{\alpha+2}\int_{\mathbb D}|f|^2\text{\rm d}A_\alpha + 
\int_{\mathbb T}|f|^2\text{\rm d}\mu 
\geq\frac{1}{\alpha+2}\int_{\overline{\mathbb D}}|f|^2\text{\rm d}(A_\alpha+\mu),\end{align*}
because of $|b_\lambda|=1$ on $\mathbb T$ and $1> 1/(\alpha+2)$ for  $-1<\alpha<\infty$. \end{proof}

Recall the following definition.

{\bf Definition.} Let $E$ be a closed subset of $\mathbb T$, and let $\{J_k\}_k$
be the collection of open arcs of $\mathbb T$ such that $J_k\cap J_\ell=\emptyset$
for $k\neq\ell$ and
$\mathbb T=E\cup\bigcup_kJ_k$. The set $E$ satisfies the {\it Carleson condition} 
if $\sum_km(J_k)\log m(J_k)>-\infty$.

\smallskip 

Let $w\in L^1(\mathbb T,m)$, $w\geq 0$ a.e. on $\mathbb T$. Then $P^2(wm)=L^2(wm)$ if and only if 
$\log w\not\in L^1(\mathbb T,m)$, and then $S_{wm}\cong U(\sigma)$, where $\sigma\subset\mathbb T$ is
a measurable set such that $wm$ and $m|_\sigma$ are mutually absolutely continuous.
If $ \log w\in L^1(\mathbb T,m)$, then there exists an outer function $\psi\in H^2$ such that 
$|\psi|^2=w$ a.e. on $\mathbb T$. Then 
\begin{equation}\label{5.2} \begin{aligned} P^2(wm) & =\frac{H^2}{\psi}=\Big\{\frac{h}{\psi}: \ h\in H^2\Big\}, 
\ \ \Big\|\frac{h}{\psi}\Big\|_{P^2(wm)}=\|h\|_{H^2},  \ h\in H^2,\\ 
\text{ and } \ \ S_{wm} & \cong S \end{aligned} \end{equation} 
(see, for example, {\cite[Ch. III.12]{6}} or {\cite[A.4.1.5]{14}}).

In Theorems \ref{th5.6} and \ref{th5.7}, we consider nontangential boundary values of functions from
$P^2(\mu)$ for some measures $\mu$. 
Nontangential boundary values of functions from
$P^t(\mu)$ with $1\leq t<\infty$ are considered in \cite{4} in relation to another questions, see also references therein, 
especially \cite{1},  \cite{12},  \cite{13},  \cite{18}, and  \cite{2}. In Theorems \ref{th5.6} and \ref{th5.7} we formulate particular cases of these results in the form convenient to our purpose.

Theorems \ref{th5.6} and  the main part of  Theorem \ref{th5.7}  were proved in {\cite[Sec. 2]{8}} for $\alpha=0$, 
but the proofs are the same in 
the case of $-1<\alpha\leq 0$ (because the estimate \eqref{5.1} involves the estimate 
$|f(z)|\leq C_\alpha\frac{\|f\|_{P^2(A_\alpha)}}{1-|z|^2}$ for
 $-1<\alpha\leq 0$, which is used in  {\cite[Sec. 2]{8}}), therefore, 
 the proofs of  Theorem \ref{th5.6}  and of the main part of Theorem \ref{th5.7}  are omitted.  
In addition,  to prove  Theorems \ref{th5.6} and \ref{th5.7}, one needs to apply the notion of isometric asymptote (see Sec. 2 of this paper and references therein). 

\begin{theorem}\label{th5.6} \cite{8}  Let $-1<\alpha\leq 0$, and let $E\subset\mathbb T$ be a closed set
such that  $0<m(E)<1$ and $E$ satisfies the Carleson condition. Then 
the functional $f\mapsto f(z)$ is bounded on $P^2(A_\alpha+m|_E)$ for every $z\in\mathbb D$.
Furthermore,    
 for   
 $f\in P^2(A_\alpha+m|_E)$ the restriction $f|_{\mathbb D}$ is  analytic on $\mathbb D$, 
$f|_{\mathbb D}$ has nontangential boundary values 
a.e. on $E$ with respect to $m$, which coincide with $f|_E$.
 Therefore, $S_{A_\alpha+m|_E}\in C_{10}$. 
Also, $I-S_{A_\alpha+m|_E}^\ast S_{A_\alpha+m|_E}$ is compact, and
$(S_{A_\alpha+m|_E})^{(a)}_+=U(E)$. Thus, $S_{A_\alpha+m|_E}$ 
is not similar to an isometry.\end{theorem}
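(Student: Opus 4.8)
The plan is to write $\nu=A_\alpha+m|_E$ and $T=S_\nu$, and to establish the assertions in the order stated, treating the boundary-value claim as the analytic core.

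First I would record the elementary domination. Since $\nu\geq A_\alpha$ as measures, the identity on analytic polynomials extends to a contraction $J\colon P^2(\nu)\to P^2(A_\alpha)$, $Jf=f|_{\mathbb D}$, which intertwines $S_\nu$ with $S_{A_\alpha}$, i.e. $JS_\nu=S_{A_\alpha}J$. Composing $J$ with the point evaluations on $P^2(A_\alpha)$ furnished by \eqref{5.1} shows at once that $f\mapsto f(z)$ is bounded on $P^2(\nu)$ for every $z\in\mathbb D$ and that $f|_{\mathbb D}$ is analytic, which is the first assertion.

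The hard part will be the second assertion: that $f|_{\mathbb D}$ has nontangential limits a.e.\ on $E$ coinciding with the $m|_E$-component $f|_E$; here the Carleson condition is essential. I would use $\sum_k m(J_k)\log(1/m(J_k))<\infty$ to build a bounded outer function $h$ with $|h|\asymp1$ on $E$ and $|h|$ decaying like a suitable power of $\operatorname{dist}(\cdot,E)$ on each complementary arc $J_k$; the Carleson sum is exactly what makes $\log|h|\in L^1(\mathbb T,m)$, so such an outer $h$ exists. The key estimate is $\|hp\|_{H^2}\leq C\|p\|_{P^2(\nu)}$ for polynomials $p$: on $E$ one bounds $\int_E|h|^2|p|^2\,\mathrm dm\leq\|h\|_\infty^2\int_E|p|^2\,\mathrm dm$ by the $m|_E$-part of the $\nu$-norm, while on each $J_k$ one controls $|p|^2$ pointwise by a sub-mean-value (Bergman) average over a Carleson box above $J_k$, the decay of $|h|$ on $J_k$ being chosen to beat the boundary blow-up in \eqref{5.1}. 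Thus $M_h$ extends to a bounded operator $P^2(\nu)\to H^2$, so $hf\in H^2$ for every $f\in P^2(\nu)$; Fatou's theorem gives nontangential limits of $hf$ a.e.\ on $\mathbb T$, hence of $f|_{\mathbb D}=(hf)/h$ a.e.\ on $E$, since $|h|\asymp1$ there. Passing to the limit along polynomials $p_n\to f$ in $L^2(\nu)$ and comparing $m|_E$-components identifies this nontangential limit with $f|_E$. This construction and estimate are the main obstacle; the rest is formal.

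Finally I would read off the remaining assertions. For $C_{1\cdot}$: by Lemma \ref{lem5.1}, $\mathcal H_{0,T}=\{f\in P^2(\nu):f|_E=0\ m\text{-a.e.}\}$, and for such $f$ the function $hf\in H^2$ has vanishing boundary values on the set $E$ of positive measure, whence $hf\equiv0$ and, $h$ being outer, $f\equiv0$; thus $X_{T,+}$ is injective. For $C_{\cdot0}$: taking adjoints in $JS_\nu=S_{A_\alpha}J$ gives $S_\nu^\ast J^\ast=J^\ast S_{A_\alpha}^\ast$; since $S_{A_\alpha}\in C_{00}$ one has $S_{A_\alpha}^{\ast n}\to0$ strongly, and as $J^\ast$ has dense range (because $J$ is injective) and $\|S_\nu^{\ast n}\|\leq1$, a density argument yields $S_\nu^{\ast n}\to0$ strongly. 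Hence $T\in C_{10}$. The compactness of $I-T^\ast T$ follows from $\langle(I-T^\ast T)f,f\rangle=\int_{\mathbb D}(1-|z|^2)|f|^2\,\mathrm dA_\alpha$, which up to a constant equals $\|Jf\|_{P^2(A_{\alpha+1})}^2$, so $I-T^\ast T$ factors through the compact inclusion $P^2(A_\alpha)\hookrightarrow P^2(A_{\alpha+1})$. By Lemma \ref{lem5.1}, $T_+^{(a)}=S_{m|_E}$; since $\log\chi_E\notin L^1(\mathbb T,m)$ one has $P^2(m|_E)=L^2(m|_E)$ and $S_{m|_E}\cong U(E)$, giving $(S_\nu)_+^{(a)}=U(E)$. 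As $U(E)$ is unitary and $T\in C_{10}$, the criterion from Section 2 shows $T$ is not similar to an isometry.
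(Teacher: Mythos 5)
You should first be aware that the paper does not actually write out a proof of Theorem~\ref{th5.6}: it is quoted from \cite{8}, with the remark that the argument given there for $\alpha=0$ carries over verbatim to $-1<\alpha\le 0$ because the pointwise bound \eqref{5.1} is then at least as strong as the one used in \cite{8}. So your proposal has to be judged on its own merits. Most of its reductions are sound and are surely close to what \cite{8} does: the contractive restriction map $J\colon P^2(A_\alpha+m|_E)\to P^2(A_\alpha)$ giving bounded point evaluations and analyticity of $f|_{\mathbb D}$; the identification $T_+^{(a)}=S_{m|_E}\cong U(E)$ via Lemma~\ref{lem5.1} and the dichotomy stated before \eqref{5.2}; the compactness of $I-T^\ast T$ through $\langle (I-T^\ast T)f,f\rangle=\tfrac{\alpha+1}{\alpha+2}\|f|_{\mathbb D}\|_{P^2(A_{\alpha+1})}^2$ and the compact inclusion $P^2(A_\alpha)\hookrightarrow P^2(A_{\alpha+1})$; and the final non-similarity conclusion. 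The $C_{1\cdot}$ and $C_{\cdot 0}$ arguments are also correct, but note that both lean on the boundary-value theorem (the latter through the injectivity of $J$, which needs the nontangential limits to identify the $m|_E$-component of a function vanishing on $\mathbb D$).

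The genuine gap is in your justification of the key estimate $\|hp\|_{H^2}\le C\|p\|_{P^2(A_\alpha+m|_E)}$. You propose to bound $\int_{J_k}|h|^2|p|^2\,\mathrm{d}m$ by controlling $|p|^2$ pointwise on $J_k$ by a sub-mean-value average over a Carleson box above $J_k$, i.e.\ by a piece of the Bergman norm, with the decay of $|h|$ compensating the blow-up in \eqref{5.1}. No estimate of this shape can hold: a boundary value $|p(\zeta)|$, $\zeta\in\mathbb T$, is not dominated by interior averages with a constant independent of $p$. Concretely, $p_n(z)=z^n$ has $\|p_n\|_{P^2(A_\alpha)}^2=v_n\asymp n^{-(\alpha+1)}\to 0$ while $|p_n|\equiv 1$ on $\mathbb T$, so $\int_{J_k}|h|^2|p_n|^2\,\mathrm{d}m=\int_{J_k}|h|^2\,\mathrm{d}m$ is a fixed positive number; hence the contribution of the complementary arcs cannot be absorbed into the Bergman part of the norm alone, no matter how fast $|h|$ decays near $E$. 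Any correct proof of the multiplier estimate must use the term $\int_E|p|^2\,\mathrm{d}m$ to control the arcs $J_k$ as well --- the needed coupling (an analytic function that is small in $P^2(A_\alpha)$ and small in $L^2(m|_E)$ becomes small in $H^2$ after multiplication by $h$) is precisely the content of Khrushchev's theorem for Carleson sets, and it is the genuinely hard step that your term-by-term splitting bypasses. Once that estimate is granted, the remainder of your argument does recover the theorem.
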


\begin{theorem}\label{th5.7}  \cite{8} Let $-1<\alpha\leq 0$, and let 
$w\in L^1(\mathbb T,m)$.
Suppose that for every closed arc $J\subset\mathbb T\setminus\{1\}$ 
there exist two constants
 $0<c_J<C_J<\infty$ such that $c_J\leq w\leq C_J$ a.e. on $J$ 
 (with respect to $m$). Then 
the functional $f\mapsto f(z)$ is bounded on $P^2(A_\alpha+wm)$ 
for every $z\in\mathbb D$. Furthermore,  for 
 $f\in P^2(A_\alpha+wm)$ the restriction $f|_{\mathbb D}$ is  analytic on 
 $\mathbb D$, $f|_{\mathbb D}$ has nontangential boundary values 
a.e. on $\mathbb T$ with respect to $m$, which coincide with $f|_{\mathbb T}$.
 Therefore, $S_{A_\alpha+wm}\in C_{10}$. 
Also, $I-S_{A_\alpha+wm}^\ast S_{A_\alpha+wm}$ is compact, $(S_{A_\alpha+wm})^{(a)}_+=S_{wm}$,
and the canonical mapping which intertwines $S_{A_\alpha+wm}$ with $S_{wm}$ 
is the natural imbedding 
$$P^2(A_\alpha+wm)\to P^2(wm), \ \ f\mapsto f|_{\mathbb T}, \ f\in P^2(A_\alpha+wm).$$

Therefore, 

$\ \ \text{\rm (i)}$ $S_{A_\alpha+wm}\sim S$ if and only if $\log w\in L^1(\mathbb T,m)$;

$\ \text{\rm (ii)}$ $S_{A_\alpha+wm}$ is similar to an isometry if and only if $S_{A_\alpha+wm}\approx S$;

$\text{\rm (iii)}$ $S_{A_\alpha+wm}\approx S$ if and only if 
$\log w\in L^1(\mathbb T,m)$ and for every $h\in H^2$ there exists $f \in P^2(A_\alpha+wm)$ 
such that $f|_{\mathbb T}=h/\psi$ a.e.  on $\mathbb T$ (with respect to $m$), where $\psi\in H^2$  is
 an outer function such that 
$|\psi|^2=w$ a.e. on $\mathbb T$. \end{theorem}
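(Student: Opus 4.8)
The plan is to set $T=S_{A_\alpha+wm}$ and to deduce (i)--(iii) from the ``main part'', which already supplies $T\in C_{10}$, the identification $T_+^{(a)}=S_{wm}$, and the fact that the canonical intertwining mapping $X_{T,+}\colon P^2(A_\alpha+wm)\to P^2(wm)$ is the restriction $f\mapsto f|_{\mathbb T}$. Two general facts will be used throughout. First, since $T\in C_{1\cdot}$ one has $\ker X_{T,+}=\{0\}$ (for $x\neq0$, $\langle x,x\rangle=\lim_n\|T^nx\|^2>0$), while $X_{T,+}$ always has dense range; thus $X_{T,+}$ is a quasiaffinity and $T\prec S_{wm}$. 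Second, $T$ is cyclic, the vector $1$ generating $P^2(A_\alpha+wm)$, so the multiplicity of $T$ equals $1$. Finally, by \eqref{5.2}, $S_{wm}\cong S$ when $\log w\in L^1(\mathbb T,m)$, whereas $S_{wm}$ is unitary when $\log w\notin L^1(\mathbb T,m)$.

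For (ii) and (iii) I would invoke the criterion recalled in Section~2 (\cite[Theorem 1]{11}): $T$ is similar to an isometry iff $X_{T,+}$ is invertible, and then the isometry is $\cong T_+^{(a)}=S_{wm}$. If $\log w\notin L^1$ then $S_{wm}$ is unitary, and since $T\in C_{10}$ the remark of Section~2 forbids similarity to an isometry; hence $T$ similar to an isometry forces $\log w\in L^1$ and $S_{wm}\cong S$, giving (ii): $T$ similar to an isometry $\Longleftrightarrow T\approx S$ (the reverse being trivial as $S$ is an isometry). For (iii), combining (ii) with the criterion, $T\approx S$ iff $X_{T,+}$ is invertible iff (injectivity being automatic) $X_{T,+}$ is onto $P^2(wm)$. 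When $\log w\in L^1$ the range space is $\{h/\psi:h\in H^2\}$ by \eqref{5.2}, so surjectivity of $f\mapsto f|_{\mathbb T}$ is exactly the stated condition; when $\log w\notin L^1$ surjectivity is impossible (it would make $T$ similar to the unitary $S_{wm}$, contradicting $T\in C_{\cdot0}$), which matches the failure of the right-hand side. This yields (iii).

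For (i), the easy implication is $\log w\in L^1\Rightarrow T\sim S$: then $T\prec S_{wm}\cong S$, so $T\prec S$, and since $T$ has multiplicity $1$ the ``moreover'' part of Theorem \ref{thb} upgrades this to $T\sim S$. For the converse I would \emph{not} merely compose $S\prec T\prec S_{wm}$ to obtain $S\prec S_{wm}$: this is too weak, because a unilateral shift can be a quasiaffine transform of a \emph{unitary} (multiplication by $\phi\in L^\infty$ with $\phi\neq0$ a.e.\ but $\log|\phi|\notin L^1$ intertwines $S$ with $U(\mathbb T)$ and has dense range by Wiener's theorem). Instead I would use the universal property of the isometric asymptote (\cite{11}, \cite[Ch.~IX]{16}): a quasiaffinity $X$ realizing $T\prec S$ factors as $X=X_+X_{T,+}$, where $X_+\colon P^2(wm)\to H^2$ intertwines $S_{wm}$ with $S$ and has dense range (since $\operatorname{ran}X\subseteq\operatorname{ran}X_+$). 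If $\log w\notin L^1$, then $S_{wm}$ is unitary, hence surjective, so $\operatorname{ran}X_+=X_+S_{wm}P^2(wm)=S\,\operatorname{ran}X_+\subseteq SH^2=zH^2$, a proper closed subspace, contradicting density. Thus $T\prec S$ forces $\log w\in L^1$; as $T\sim S$ implies $T\prec S$, the converse in (i) follows.

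The main obstacle is precisely this last step: ruling out quasisimilarity to $S$ when $\log w\notin L^1$ while $w$ may still be positive a.e. In that case neither the unitary asymptote (which is $U(\mathbb T)$ for both $S$ and $T$) nor a soft ``shift $\not\prec$ unitary'' argument suffices; one must exploit that passing to the isometric asymptote is functorial and that the asymptote $S_{wm}$ is then \emph{surjective}, which is exactly what the range computation above encodes. The same conclusion can be reached through Theorem \ref{thb} ($T\prec S$ forces $I-T^\ast T\in\mathfrak S_1$, and one would then show $I-T^\ast T\in\mathfrak S_1\Leftrightarrow\log w\in L^1$), but verifying the latter equivalence by a direct trace-class estimate on $S_{A_\alpha+wm}$ is considerably more laborious than the asymptote argument.
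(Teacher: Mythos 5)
Your proposal is correct and follows essentially the same route as the paper: the quasiaffinity $f\mapsto f|_{\mathbb T}$, K\'erchy's criterion for similarity to an isometry, Theorem~\ref{thb}(5) for the ``if'' part of (i), and the factorization of an intertwiner through the isometric asymptote for the ``only if'' part of (i). The only cosmetic difference is the final contradiction when $\log w\notin L^1(\mathbb T,m)$: you show $\operatorname{ran}X_+\subseteq zH^2$ directly, whereas the paper passes to the adjoint $Y^\ast$ of the dense-range intertwiner and contradicts $S^\ast\in C_{0\cdot}$; the two arguments are interchangeable.
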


{\bf Remark.} In the conditions of Theorem \ref{th5.7}, let $h,\psi\in H^2$, $\psi(z)\neq 0$ for every $z\in\mathbb D$,
$f\in P^2(A_\alpha+wm)$, and $f|_{\mathbb T} =h/\psi$ a.e. on $\mathbb T$ (with respect to $m$). Then 
$f(z)=h(z)/\psi(z)$ for every $z\in\mathbb D$. Indeed, set $g(z)=h(z)/\psi(z)$, $z\in\mathbb D$. Then 
$g$ is a function analytic on $\mathbb D$, and $g$ has nontangential boundary values $h(\zeta)/\psi(\zeta)$ 
for a.e. $\zeta\in\mathbb T$. Then $f-g$ is a function analytic on $\mathbb D$, and $f-g$ has zero nontangential 
boundary values  a.e. on $\mathbb T$. By Privalov's theorem (see, for example, {\cite[Theorem 8.1]{5}}), 
$f(z)=g(z)$ for every $z\in\mathbb D$. Therefore, if the conditions (iii) of Theorem \ref{th5.7} are fulfilled, 
then $ P^2(A_\alpha+wm)=H^2/\psi$ as the set, and the norms on these spaces are equivalent. 

\smallskip 

{\it Proof of Theorem \ref{th5.7}.} The main part of Theorem \ref{th5.7} is proved in {\cite[Sec. 2]{8}}.
Let $X$ be the imbedding,  
$$X\colon P^2(A_\alpha+wm)\to P^2(wm), \ \ Xf=f|_{\mathbb T}, \ \ f\in P^2(A_\alpha+wm).$$
Then $XS_{A_\alpha+wm}=S_{wm}X$. Since $S_{A_\alpha+wm}\in C_{1\cdot}$, $\ker X=\{0\}$, 
therefore, $X$ is a quasiaffinity which realizes the relation  $S_{A_\alpha+wm}\prec S_{wm}$.
If $\log w\in L^1(\mathbb T,m)$, then $S_{wm}\cong S$, therefore, $S_{A_\alpha+wm}\prec S$. 
Since $S_{A_\alpha+wm}$ is a cyclic contraction, $S_{A_\alpha+wm}\sim S$ by Theorem \ref{thb}(5). 
The ``if" part of (i) is proved.

The assumptions of the ``if" part of (iii) mean that $X P^2(A_\alpha+wm) = P^2(wm)$, see \eqref{5.2}.
Thus, $X$ realizes the relation $S_{A_\alpha+wm}\approx S_{wm}$, and, since $S_{wm}\cong S$,
the relation $S_{A_\alpha+wm}\approx S$ is proved.

Now suppose that $S_{A_\alpha+wm}\approx V$, where $V$ is an isometry. Then, by {\cite[Theorem 1]{11}}
(see Sec. 2 of this paper), $X P^2(A_\alpha+wm) = P^2(wm)$ and $V\cong S_{wm}$. Since 
$S_{A_\alpha+wm}\in C_{10}$, $S_{wm}\in C_{10}$. By {\cite[A.4.1.5]{14}} (see the description of $S_{wm}$ 
before \eqref{5.2} in this paper), $\log w\in L^1(\mathbb T,m)$ and $S_{wm}\cong S$. The parts (ii) and (iii)
are proved.

Now suppose that $S_{A_\alpha+wm}\sim S$. By  {\cite[Theorem 1]{11}}, see also {\cite[Ch. IX.1]{16}}, there exists 
an operator $Y\colon P^2(wm)\to H^2$ such that $YS_{wm}=SY$ and $\operatorname{clos}Y P^2(wm)= H^2$.
If $\log w\not\in L^1(\mathbb T,m)$, then $S_{wm}$ is unitary, and from the relations $Y^\ast S^\ast=S_{wm}^\ast Y^\ast$
and $\ker Y^\ast=\{0\}$ we conclude that $S^\ast\in C_{1\cdot}$, a contradiction. Therefore, 
$\log w\in L^1(\mathbb T,m)$. The ``only if" part of (i) is proved. \qed

\smallskip

The following lemma is a variant of  {\cite[Theorem 1.7]{9}}.

\begin{lemma}\label{lem5.8} Let $-1<\alpha<\infty$, and let $\beta\in\mathbb R$. 
Put $\varphi_\beta(z)=1/(1-z)^\beta$, $z\in\mathbb D$. Then 
$\varphi_\beta\in H^2$ if and only if $\beta<1/2$, and 
$\varphi_\beta\in P^2(A_\alpha)$ if and only if $\beta<1+\alpha/2$.\end{lemma}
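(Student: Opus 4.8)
The plan is to reduce both membership questions to the convergence of an explicit numerical series built from the Taylor coefficients of $\varphi_\beta$. Since $\varphi_\beta$ is analytic on $\mathbb D$, write $\varphi_\beta(z)=\sum_{n\ge0}a_nz^n$. For $\beta\le0$ the function $\varphi_\beta$ is bounded on $\overline{\mathbb D}$ (a polynomial when $\beta$ is a nonpositive integer, and continuous up to the boundary otherwise, since $1-z$ takes values in the closed right half-plane), hence lies in both $H^2$ and $P^2(A_\alpha)$; moreover the inequalities $\beta<1/2$ and $\beta<1+\alpha/2$ both hold, because $1+\alpha/2>1/2>0\ge\beta$ for $-1<\alpha<\infty$, so the asserted equivalences are trivially true in this range. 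Thus I may assume $\beta>0$, in which case the binomial expansion gives $a_n=\Gamma(n+\beta)/(\Gamma(\beta)\Gamma(n+1))$, and Stirling's formula yields $a_n\asymp n^{\beta-1}$ as $n\to\infty$, with constants depending only on $\beta$.

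First I would treat $H^2$. Because $\{z^n\}_n$ is an orthogonal basis of $H^2$ with $\|z^n\|_{H^2}=1$, one has $\|\varphi_\beta\|_{H^2}^2=\sum_{n\ge0}|a_n|^2$, interpreted as $+\infty$ when the sum diverges (via monotone convergence applied to the partial sums, which already gives the ``only if'' direction). Since $|a_n|^2\asymp n^{2\beta-2}$, this series converges if and only if $2\beta-2<-1$, that is $\beta<1/2$, which is the first claim.

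For $P^2(A_\alpha)$ the point is that the monomials are again mutually orthogonal, now because $A_\alpha$ is rotation invariant, so for analytic $f=\sum a_nz^n$ one has $\|f\|_{P^2(A_\alpha)}^2=\sum_{n\ge0}|a_n|^2\|z^n\|_{P^2(A_\alpha)}^2$ in the same extended sense; together with the description of $P^2(A_\alpha)$ as the space of analytic $L^2(A_\alpha)$ functions, this reduces membership to convergence of the series. A Beta-integral computation in polar coordinates gives
$$\|z^n\|_{P^2(A_\alpha)}^2=(\alpha+1)\int_{\mathbb D}|z|^{2n}(1-|z|^2)^\alpha\,\mathrm dm_2(z)=\frac{\Gamma(\alpha+2)\,\Gamma(n+1)}{\Gamma(n+\alpha+2)},$$
and Stirling gives $\|z^n\|_{P^2(A_\alpha)}^2\asymp n^{-(\alpha+1)}$. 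Hence $|a_n|^2\|z^n\|_{P^2(A_\alpha)}^2\asymp n^{2\beta-2}\,n^{-(\alpha+1)}=n^{2\beta-\alpha-3}$, and the series converges if and only if $2\beta-\alpha-3<-1$, that is $\beta<1+\alpha/2$, which is the second claim.

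The computations are routine; the only points requiring care are the two Stirling asymptotics and the passage from ``convergence of $\sum|a_n|^2\|z^n\|^2$'' to ``membership of $\varphi_\beta$ in the space''. I would handle the latter by expanding each norm against the orthogonal basis of monomials and using monotone convergence on the partial sums $\sum_{n\le N}a_nz^n$, so that convergence of the coefficient series produces a function in the space with the stated Taylor expansion, while divergence forces the $H^2$ (resp. $L^2(A_\alpha)$) norm of $\varphi_\beta$ to be infinite. The main obstacle is thus establishing the Beta asymptotics $\|z^n\|_{P^2(A_\alpha)}^2\asymp n^{-(\alpha+1)}$, and it is resolved cleanly by Stirling's formula. (Alternatively one could estimate $\int_{\mathbb D}(1-|z|^2)^\alpha|1-z|^{-2\beta}\,\mathrm dm_2(z)$ directly near $z=1$, but the coefficient method has the advantage of treating $H^2$ and $P^2(A_\alpha)$ in a uniform way.)
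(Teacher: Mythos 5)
Your proposal is correct and follows essentially the same route as the paper: both expand $\varphi_\beta$ in its Taylor coefficients $\widehat\varphi_\beta(n)=\Gamma(\beta+n)/(n!\,\Gamma(\beta))$, use the orthogonality of monomials to reduce membership in $H^2$ and $P^2(A_\alpha)$ to convergence of $\sum_n|\widehat\varphi_\beta(n)|^2\|z^n\|^2$ with $\|z^n\|^2_{P^2(A_\alpha)}=n!\,\Gamma(\alpha+2)/\Gamma(\alpha+n+2)$, and conclude via Stirling that the general term is comparable to $n^{2\beta-\alpha-3}$ (resp.\ $n^{2\beta-2}$). The only difference is presentational: you spell out the trivial case $\beta\le0$ and the monotone-convergence justification slightly more explicitly than the paper does.
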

\begin{proof} Put $v_n=2(\alpha+1)\int_0^1r^{2n+1}(1-r^2)^\alpha\text{\rm d}r$, $n\geq 0$.
Then $v_n=\frac{n!\Gamma(\alpha+2)}{\Gamma(\alpha+n+2)}$, where $\Gamma$ is the Gamma function, 
and
for every function $f$ analytic on $\mathbb D$
$$\int_{\mathbb D}|f|^2\text{\rm d} A_\alpha=\sum_{n=0}^\infty |\widehat f(n)|^2 v_n.$$
If $\beta\leq 0$, then $\varphi_\beta\in P^2(A_\alpha)$ for every $\alpha$, $-1<\alpha<\infty$. Suppose $\beta>0$. 
Since $\widehat \varphi_{\beta}(n)=\frac{\Gamma(\beta+n)}{n!\Gamma(\beta)}$, $n\geq 0$, we have that 
$\varphi_\beta\in P^2(A_\alpha)$ if and only if the series 
$\sum_{n=0}^\infty
\frac{\Gamma(\beta+n)^2}{n!\Gamma(\alpha+n+2)}$ converges. By Stirling's formula,
$$\frac{\Gamma(\beta+n)^2}{n!\Gamma(\alpha+n+2)}\sim (n+1)^{2\beta-\alpha-3} \ \ \text { as } \ n\to\infty.$$
Therefore,  the series 
$\sum_{n=0}^\infty
\frac{\Gamma(\beta+n)^2}{n!\Gamma(\alpha+n+2)}$ converges if and only if $\beta<1+\alpha/2$.

The first statement of the lemma can be proved similarly.  \end{proof}

\begin{lemma}\label{lem5.9} Let $-1<\alpha\leq 0$, and let $\beta<-1-\alpha$. Then $S_{A_\alpha+|\varphi_\beta| m}\sim S$,
but $S_{A_\alpha+|\varphi_\beta| m}$ is not similar to an isometry.\end{lemma}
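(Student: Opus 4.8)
The plan is to read this off Theorem~\ref{th5.7} applied to the weight $w=|\varphi_\beta|$ on $\mathbb T$, after first checking that $w$ meets its hypotheses. Since $|\varphi_\beta(\zeta)|=|1-\zeta|^{-\beta}$ and $\beta<-1-\alpha<0$, the weight $w(\zeta)=|1-\zeta|^{-\beta}$ is continuous and bounded on $\mathbb T$, strictly positive off $\zeta=1$; hence $w\in L^1(\mathbb T,m)$, and on each closed arc $J\subset\mathbb T\setminus\{1\}$ it is bounded above and below by positive constants, as Theorem~\ref{th5.7} requires. I would also record that the outer function with $|\psi|^2=w$ is $\psi=\varphi_{\beta/2}$, which lies in $H^2$ by Lemma~\ref{lem5.8} (since $\beta/2<1/2$) and is zero-free on $\mathbb D$.

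For the first assertion, I would note $\log w=-\beta\log|1-\zeta|\in L^1(\mathbb T,m)$ and invoke Theorem~\ref{th5.7}(i) to conclude $S_{A_\alpha+|\varphi_\beta|m}\sim S$ directly.

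For the second assertion I would argue by contradiction via Theorem~\ref{th5.7}(ii),(iii). If $S_{A_\alpha+|\varphi_\beta|m}$ were similar to an isometry, then $S_{A_\alpha+|\varphi_\beta|m}\approx S$, so for every $h\in H^2$ there would exist $f\in P^2(A_\alpha+|\varphi_\beta|m)$ with $f|_{\mathbb T}=h/\psi$ a.e. By the Remark following Theorem~\ref{th5.7} (here $\psi$ is zero-free on $\mathbb D$) this forces $f=h/\psi$ throughout $\mathbb D$; and since $(A_\alpha+|\varphi_\beta|m)|_{\mathbb D}=A_\alpha$, the analytic restriction $f|_{\mathbb D}$ would lie in $P^2(A_\alpha)$. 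I would then choose the test function $h=\varphi_\gamma$ with $\gamma=1+(\alpha+\beta)/2$, so that $h/\psi=\varphi_\gamma/\varphi_{\beta/2}=\varphi_{\gamma-\beta/2}=\varphi_{1+\alpha/2}$.

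The crux is that this single choice is simultaneously admissible and fatal precisely under the hypothesis $\beta<-1-\alpha$. It is admissible because $\gamma<1/2$ is equivalent to $\alpha+\beta<-1$, so $h=\varphi_\gamma\in H^2$ by Lemma~\ref{lem5.8}; it is fatal because $\varphi_{1+\alpha/2}\notin P^2(A_\alpha)$ by Lemma~\ref{lem5.8}, the exponent $1+\alpha/2$ lying exactly on the boundary of membership. This contradiction shows $S_{A_\alpha+|\varphi_\beta|m}$ is not similar to an isometry. The only real obstacle is pinpointing this test exponent: one must push $h/\psi$ up to the critical threshold $1+\alpha/2$ for $P^2(A_\alpha)$ while keeping the numerator $h$ inside $H^2$, and the two requirements are compatible exactly on the range $\beta<-1-\alpha$, which is why the hypothesis is used sharply.
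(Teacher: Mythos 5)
Your proposal is correct and follows essentially the same route as the paper: Theorem~\ref{th5.7}(i) for $S_{A_\alpha+|\varphi_\beta|m}\sim S$, then Theorem~\ref{th5.7}(iii) with $\psi=\varphi_{\beta/2}$ and a test function $h=\varphi_\gamma$ whose quotient $h/\psi$ falls outside $P^2(A_\alpha)$ by Lemma~\ref{lem5.8}. The paper allows any $\gamma$ with $1+\alpha/2+\beta/2\leq\gamma<1/2$, while you take the left endpoint $\gamma=1+(\alpha+\beta)/2$; this is the same argument, just with the sharpest admissible choice.
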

\begin{proof} 
By Theorem \ref{th5.7}(i), 
$S_{A_\alpha+|\varphi_\beta|m}\sim S$. Put $\psi=\varphi_{\beta/2}$. Clearly, $|\psi|^2=|\varphi_\beta|$.
By \eqref{5.2}, $P^2(|\varphi_\beta|m)=H^2/\psi$. By Theorem \ref{th5.7}(iii), if $S_{A_\alpha+|\varphi_\beta|m}$ 
is similar to an isometry, then $H^2/\psi = P^2(A_\alpha+|\varphi_\beta|m)\subset P^2(A_\alpha)$.
Take $\gamma$,  $1+\alpha/2+\beta/2\leq\gamma<1/2$. Put $h=\varphi_\gamma$. Then $h\in H^2$, 
and $h/\psi=\varphi_{\gamma-\beta/2}\not\in P^2(A_\alpha)$
by Lemma \ref{lem5.8}. Therefore, $S_{A_\alpha+|\varphi_\beta|m}$ is not similar to an isomertry.  \end{proof}

\begin{corollary}\label{cor5.10} Let $0<\delta<1$, and let $E\subset\mathbb T$ be a closed set 
satisfying the Carleson condition and such that $0<m(E)<1$. Then there exist operator-valued 
inner functions $F_k$, 
such that $F_k$ satisfy $\eqref{1.2}$, and $\eqref{1.3}$ with $\delta$,  and $F_k$ are not 
left invertible,   
$k=1,2,3$. Also, $F_1$, $F_2$, $F_3$ satisfy $\eqref{1.4}$, $\eqref{1.6}$, $\eqref{1.5}$, respectively, 
$F_3$ has an outer left scalar multiple, and  $I-F_3(z)^\ast F_3(z)\in\frak S_1$ for every $z\in\mathbb D$.
\end{corollary}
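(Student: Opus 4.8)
The plan is to produce each $F_k$ as the characteristic function $\Theta_{T_k}$ of a \emph{cyclic subnormal} contraction $T_k=S_{\nu_k}$ on $P^2(\nu_k)$ (with $\mathcal E=\mathcal D_{T_k}$ and $\mathcal E_\ast=\mathcal D_{T_k^\ast}$), choosing the three measures so that the asymptotic information already recorded for such operators forces the three boundary regimes. Given $\delta$, I first fix $\alpha\in(-1,0]$ with $(\alpha+2)^{-1/2}\ge\delta$; this is possible for every $\delta\in(0,1)$ since $(\alpha+2)^{-1/2}\to1$ as $\alpha\to-1^+$. I then set
$$\nu_1=A_\alpha,\qquad \nu_2=A_\alpha+m|_E,\qquad \nu_3=A_\alpha+|\varphi_\beta|m\ \ (\beta<-1-\alpha),$$
so that $T_1\in C_{00}$, while $T_2,T_3\in C_{10}$ by Theorems \ref{th5.6} and \ref{th5.7}.

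First I would verify \eqref{1.3} and \eqref{1.2} uniformly. Since $b_\lambda(S_{\nu_k})$ is multiplication by $b_\lambda$, Corollary \ref{cor5.5} (Lemma \ref{lem5.4} when $\mu=0$) gives $\|b_\lambda(S_{\nu_k})f\|\ge(\alpha+2)^{-1/2}\|f\|\ge\delta\|f\|$, which by Lemma \ref{lem3.2}(i) is exactly the lower bound in \eqref{1.3}; the upper bound is $\|\Theta_{T_k}\|\le1$. For \eqref{1.2} I use Lemma \ref{lem3.2}(ii) to reduce to $\dim\mathcal H\ominus b_\lambda(S_{\nu_k})\mathcal H=1$. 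As $I-\overline\lambda S_{\nu_k}$ is invertible, $b_\lambda(S_{\nu_k})\mathcal H=(S_{\nu_k}-\lambda)\mathcal H$, which is closed by the same lower bound, so this codimension equals $\dim\ker(S_{\nu_k}-\lambda)^\ast$. Here the reproducing kernel $k_\lambda$ exists because the point evaluations are bounded (by \eqref{5.1} and Theorems \ref{th5.6}, \ref{th5.7}) and satisfies $(S_{\nu_k}-\lambda)^\ast k_\lambda=0$, forcing the dimension to be $\ge1$; cyclicity of $S_{\nu_k}$ (the constant function $1$ is a cyclic vector) forces it to be $\le1$, because any $f\in\ker(S_{\nu_k}-\lambda)^\ast$ is determined on the dense set $\{S_{\nu_k}^n1:n\ge0\}$ by the single scalar $\langle1,f\rangle$.

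Next I would read off innerness, the boundary regime, and the failure of left invertibility. Each $T_k$ is completely nonunitary and of class $C_{\cdot0}$, so each $\Theta_{T_k}$ is inner, and cyclicity makes the unitary asymptote cyclic, so \eqref{3.6} holds and \eqref{3.7} is available. For $T_1\in C_{00}$ the isometric, hence the unitary, asymptote is trivial, so $\omega_{T_1}$ is null and \eqref{3.7} gives \eqref{1.4}. For $T_2$, Theorem \ref{th5.6} gives $(S_{\nu_2})^{(a)}_+=U(E)$, so $\omega_{T_2}=E$ and \eqref{3.7} gives \eqref{1.6}. For $T_3$, $\log|\varphi_\beta|\in L^1(\mathbb T,m)$, so by Theorem \ref{th5.7} the isometric asymptote is $S_{|\varphi_\beta|m}\cong S$, whose minimal unitary extension is $U(\mathbb T)$; thus $\omega_{T_3}=\mathbb T$, giving \eqref{1.5}. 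None of the $T_k$ is similar to an isometry ($T_1$ is $C_{0\cdot}$ whereas isometries are $C_{1\cdot}$; $T_2,T_3$ by Theorem \ref{th5.6} and Lemma \ref{lem5.9}), so Theorem \ref{tha} shows that no $\Theta_{T_k}$ is left invertible.

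Finally, for the extra properties of $F_3$ I would use $S_{\nu_3}\sim S$ from Lemma \ref{lem5.9}: this gives $S_{\nu_3}\prec S_1$ with multiplicity $1$, so Theorem \ref{thb}(6) supplies an outer left scalar multiple of $\Theta_{T_3}$, while the Remark after Theorem \ref{thb} together with Lemma \ref{lem3.2}(iii) gives $I-F_3(z)^\ast F_3(z)\in\frak S_1$ for every $z\in\mathbb D$. I expect the only genuinely delicate point to be the codimension identity $\dim\ker(S_{\nu_k}-\lambda)^\ast=1$; the reproducing-kernel-plus-cyclicity argument above settles it, and everything else is a matter of feeding the recorded structural facts into Lemma \ref{lem3.2} and \eqref{3.7} case by case.
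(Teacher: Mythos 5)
Your proposal is correct and follows essentially the same route as the paper: the same three measures $A_\alpha$, $A_\alpha+m|_E$, $A_\alpha+|\varphi_\beta|m$ with $\alpha$ chosen so that $(\alpha+2)^{-1}\ge\delta^2$, the same reduction via Lemma \ref{lem3.2} and Corollary \ref{cor5.5}, the same reading of $\omega_{T_k}$ from the asymptotes via \eqref{3.7}, and the same appeal to Theorems \ref{tha} and \ref{thb}. The only cosmetic differences are that you identify $\mathcal H_k\ominus b_\lambda(T_k)\mathcal H_k$ with $\ker(S_{\nu_k}-\lambda)^\ast$ spanned by the reproducing kernel (the paper exhibits $g_\lambda$ directly as orthogonal to $b_\lambda(T_k)\mathcal H_k$) and that you obtain $\omega_{T_3}=\mathbb T$ from the isometric asymptote in Theorem \ref{th5.7} rather than from $T_3\sim S$; both are equivalent.
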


{\bf Remark.} Since $F_3$ is not left invertible, $F_3$ does not satisfy \eqref{1.7}, see \cite{22}.

\smallskip 

{\it Proof of Corollary \ref{cor5.10}.} Put $\alpha=1/\max(\delta^2, 1/2)-2$, then  $-1<\alpha\leq 0$.  
Take $\beta<-1-\alpha$.  Put 
$$\mathcal H_1=P^2(A_\alpha), \ \ \mathcal H_2=P^2(A_\alpha+m|_E), \ \ \mathcal H_3=P^2(A_\alpha+|\varphi_\beta|m), $$
$$ T_1=S_{A_\alpha}, \ \ T_2=S_{A_\alpha+m|_E}, \ \ T_3=S_{A_\alpha+|\varphi_\beta|m}.$$
Clearly, $T_k$ are cyclic contractions, $k=1,2,3$, $T_1\in C_{00}$, and $T_k\in C_{10}$ by 
Theorems \ref{th5.6} and \ref{th5.7},
$k=2,3$. By Corollary \ref{cor5.5}, \begin{equation}\label{5.3} \|b_\lambda(T_k)f\|^2\geq \delta^2\|f\|^2 \ \text{ for every } \ \lambda\in\mathbb D, \ 
\ f\in\mathcal H_k, \ \ k=1,2,3. \end{equation}

By Theorems  \ref{th5.6} and \ref{th5.7}, the functionals 
$$ f\mapsto f(z), \ \ \mathcal H_k\to\mathbb C,$$
are bounded for every $z\in\mathbb D$, $k=1,2,3$.
Therefore, \begin{equation}\label{5.4} \dim \mathcal H_k\ominus b_\lambda(T_k)\mathcal H_k =1 \ \text{ for every } \ \lambda\in\mathbb D, \ 
\ \ k=1,2,3. \end{equation}
Indeed, let $k$ be fixed, and let $\lambda\in\mathbb D$. Then there exists $g_\lambda\in \mathcal H_k$ such that
$f(\lambda)=(f,g_\lambda)$ for every $f\in \mathcal H_k$. Since $(b_\lambda(T_k)f)(z)=b_\lambda(z)f(z)$ for every
$z\in\mathbb D$, $f\in \mathcal H_k$, it is clear that $g_\lambda\in\mathcal H_k\ominus b_\lambda(T_k)\mathcal H_k$. 
Since $T_k$ is cyclic, $T_k-\lambda I$ is cyclic, too, therefore, 
 $\dim \mathcal H_k\ominus b_\lambda(T_k)\mathcal H_k = \dim \mathcal H_k\ominus (T_k-\lambda I)\mathcal H_k\leq 1$. 
The equality \eqref{5.4} is proved.

Now find the unitary asymptotes of $T_k$, $k=1,2,3$, see Sec. 2 of this paper and references therein.
 Since $T_1\in C_{00}$, $T_1^{(a)}=\mathbb O$. By Theorem \ref{th5.6}, $T_2^{(a)}=U(E)$. By Lemma \ref{lem5.9},
$T_3\sim S$, therefore, $T_3^{(a)}=U(\mathbb T)$, the bilateral shift of multiplicity 1.
Since $T^{(a)}\cong U(\omega_T)$ for every cyclic completely nonunitary contraction $T$, where 
$\omega_T$ is defined in \eqref{3.5}, we conclude that 
\begin{equation}\label{5.5} \omega_{T_1}=\emptyset, \ \ \  \omega_{T_2}=E, \ \ \ \omega_{T_3}=\mathbb T.  \end{equation} 
Also, $T_1$ is not similar to an isometry, because $T_1\in C_{00}$, and $T_2$ and $T_3$ are 
not similar to an isometry by Theorem \ref{th5.6} and Lemma \ref{lem5.9}, respectively.

Now put $F_k=\Theta_{T_k}$, that is, $F_k$ is the characteristic function of the contraction $T_k$, 
 $k=1,2,3$, see Sec. 3 of this paper and references therein. Since $T_k\in C_{\cdot 0}$, $F_k$ are inner.
By \eqref{5.3} and Lemma \ref{lem3.2}(i), $F_k$ satisfy \eqref{1.3} with $\delta$. By \eqref{5.4} and 
Lemma \ref{lem3.2}(ii), $F_k$ satisfy \eqref{1.2}. $F_k$ are not left invertible,  because of $T_k$ are not 
similar to an isometry, see Theorem \ref{tha}. $F_1$, $F_2$, $F_3$ satisfy \eqref{1.4}, \eqref{1.6}, \eqref{1.5}, respectively, 
because of \eqref{3.7} and \eqref{5.5}. Since $T_3\sim S$ (by Lemma \ref{lem5.9}), $F_3$  has an outer 
left scalar multiple
by Theorem \ref{thb}(6), and  $I-F_3(z)^\ast F_3(z)\in\frak S_1$ for every $z\in\mathbb D$ 
by Lemma \ref{lem3.2}(iii) and Theorem \ref{thb}(2).
  \qed

\end{document}